\newtheorem{theo}{Theorem}[section]
\newtheorem{lemma}[theo]{Lemma}
\newtheorem{cor}[theo]{Corollary}
\newtheorem{example}[theo]{Example}
\newtheorem{problem}[theo]{Problem}
\newenvironment{proof}{\noindent {\sc Proof}.}
                {\phantom{a} \hfill \framebox[2.2mm]{ } \bigskip}
\newenvironment{proofof}{\noindent {\sc Proof of Theorem}}
                {\phantom{a} \hfill \framebox[2.2mm]{ } \bigskip}
\def\int{{\rm int}}
\newcommand{\eps}{\varepsilon}
\renewcommand{\cal}{\mathcal}
\title{Decomposing complete equipartite multigraphs \\ into cycles of variable lengths: \\ the amalgamation-detachment approach}
\author{Amin Bahmanian\footnote{Mailing address: Department of Mathematics, Illinois State University, Stevenson Hall 313, Campus Box 4520, Normal, Illinois, 61790-4520, USA.} $\,$ and Mateja \v{S}ajna\footnote{Email: msajna@uottawa.ca. Phone: +613-562-5800 ext. 3522. Mailing address: Department of Mathematics and Statistics, University of Ottawa, 585 King Edward Avenue, Ottawa, Ontario, K1N 6N5,Canada.} \\ University of Ottawa}
\begin{document}
\maketitle \baselineskip 17pt

\begin{abstract}
Using the technique of amalgamation-detachment, we show that the complete equipartite multigraph $\lambda K_{n\times m}$ can be decomposed into cycles of lengths $c_1m,\dots,c_km$ (plus a 1-factor if the degree is odd) whenever there exists a decomposition of $\lambda m K_n$ into cycles of lengths $c_1, \dots,c_k$ (plus a 1-factor if the degree is odd). In addition, we give sufficient conditions for the existence of some other, related cycle  decompositions of the complete equipartite multigraph $\lambda K_{n\times m}$.

\medskip
\noindent {\em Keywords:} Complete equipartite multigraph; Alspach's Conjecture; cycle decomposition; amalgamation; detachment.
\end{abstract}

\section{Introduction}

A {\em decomposition} of a graph $\cal G$ is a collection of subgraphs of $\cal G$ whose edge sets partition the edge set of $\cal G$. A graph with a cycle decomposition has no vertices of odd degree, however, a graph in which every vertex has odd degree may admit a decomposition into cycles and a 1-factor. By a {\em $(c_1,c_2,\ldots,c_k)$-cycle decomposition} of a graph $\cal G$ we shall mean a decomposition of $\cal G$ into $k$ cycles of lengths $c_1,\dots,c_k$, respectively, if every vertex in $\cal G$ has even degree, and a decomposition of $\cal G$ into cycles of lengths $c_1,\dots,c_k$ plus a 1-factor if every vertex in $\cal G$ has odd degree.

In this paper, we are concerned with decompositions of complete multigraphs and complete equipartite multigraphs into cycles of variable lengths (plus a 1-factor if the  vertex degrees are odd).  In particular, we show how cycle decompositions of complete multigraphs can be used to obtain cycle decompositions of complete equipartite multigraphs.

Bryant, Horsley, and Pettersson \cite{BryHorsPett} recently proved the following result, which had been conjectured over thirty years ago by Alspach \cite{AlspachConj}.

\begin{theo}\label{BryHorsPett} {\rm \cite{BryHorsPett}}
The complete graph $K_n$ admits a $(c_1,c_2,\ldots,c_k)$-cycle decomposition if and only if $3\leq c_1,\dots,c_k\leq n$ and $\sum_{i=1}^k c_i=  n \lfloor \frac{n-1}{2}\rfloor$.
\end{theo}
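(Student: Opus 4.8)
Since this is Alspach's conjecture, whose proof is a substantial paper in its own right, I sketch only the overall strategy. The \textbf{necessity} direction is immediate: a cycle has length between $3$ and $n$, so $3 \le c_i \le n$; and comparing edge counts, $\sum_i c_i$ equals $e(K_n)$ when $n$ is odd and $e(K_n) - \tfrac n2$ when $n$ is even (a $1$-factor of $K_n$ has $\tfrac n2$ edges), and in both cases this is $n\lfloor\tfrac{n-1}{2}\rfloor$. For \textbf{sufficiency}, note first that $\mathrm{Aut}(K_{2m}) = S_{2m}$ acts transitively on perfect matchings, so for even $n$ the statement is equivalent to decomposing $K_n$ with a fixed $1$-factor deleted into cycles of lengths $c_1,\dots,c_k$. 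Writing $G_n$ for $K_n$ when $n$ is odd and $K_n$ minus a $1$-factor when $n$ is even --- a connected graph of even regular degree $n-1$ or $n-2$ --- it therefore suffices to prove: $G_n$ decomposes into cycles of lengths $c_1 \ge \dots \ge c_k$ whenever $3 \le c_i \le n$ and $\sum_i c_i = e(G_n)$.

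The plan for sufficiency is to reduce to a few well-understood regimes and patch, with an induction on $k$. The building blocks available are: (i) the \emph{uniform} case --- $G_n$ decomposes into cycles of a single length $m$ whenever the obvious congruence conditions hold (Alspach--Gavlas for $m$ odd, \v{S}ajna for $m$ even); (ii) the \emph{long-cycle} case --- $G_n$ decomposes into cycles $c_1 \ge \dots \ge c_k$ when every $c_i$ exceeds roughly $n/2$ (Bryant--Horsley); and (iii) the \emph{short-cycle} case --- $G_n$, and more delicately suitable dense even subgraphs of it, decompose into cycles all of bounded length (Bryant--Horsley, via triangle decompositions and local perturbations). The central step is a \emph{packing-and-residue} argument: split $\{c_1,\dots,c_k\}$ into a ``long part'' $L$ and a ``short part'' $S$, pack $G_n$ with edge-disjoint cycles of the lengths in $L$ using (ii) and (i), and arrange that the leave $R = G_n - (\text{packed cycles})$ is an even graph dense and structured enough for (iii) and (i) to decompose it into cycles of the lengths in $S$; concatenating the two families finishes the job. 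For the fine control needed to realize the exact profile $c_1,\dots,c_k$, I would use \emph{cycle switches}: two edge-disjoint cycles of the current decomposition that share a vertex merge into a single cycle of the summed length, and two cycles sharing at least two vertices re-route into cycles $c_i', c_j'$ with $c_i' + c_j' = c_i + c_j$ and $c_i'$ ranging over an interval; since long cycles in a dense graph always overlap, such moves nudge the length profile toward the target one step at a time.

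The \textbf{main obstacle} --- the reason the full conjecture resisted proof for three decades after the uniform, long, and short regimes had been settled --- is the \emph{interface} between $L$ and $S$. One must simultaneously guarantee that the long cycles can be removed so that the leave $R$ is precisely one of the host graphs to which a short-cycle (or uniform) theorem applies, with no vertex of the wrong degree and no parity obstruction, \emph{and} that every ``medium'' target length near $n/2$ --- belonging cleanly to neither the short nor the long regime --- is accommodated. I would expect to control $R$ by removing the long cycles in a disciplined way (for example as $2$-factors or near-$2$-factors, or in groups that keep the degree drop uniform so that $R$ remains near-regular), and to absorb the medium lengths either by a dedicated construction or by first building them from shorter cycles and then lengthening via the switches above. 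What remains is a nontrivial but essentially routine case analysis of the boundary situations --- $L$ or $S$ empty, $k$ small, and the sporadic small values of $n$ --- to be dispatched by explicit decompositions.
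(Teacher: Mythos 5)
The paper offers no proof of this statement: it is quoted verbatim as an external result of Bryant, Horsley, and Pettersson, and is used throughout only as a black box (to feed the main theorem and Theorem~\ref{additional-2}). So there is no in-paper argument to compare yours against. Your necessity direction is correct and complete: the bounds $3\le c_i\le n$ are immediate, and the count $\sum_i c_i = n\lfloor\frac{n-1}{2}\rfloor$ correctly accounts for the $\frac{n}{2}$ edges of the $1$-factor when $n$ is even. Your observation that $\mathrm{Aut}(K_n)$ is transitive on perfect matchings, so that the deleted $1$-factor may be fixed once and for all, is also a legitimate normalization.

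The sufficiency direction, however, is a research programme rather than a proof, and you say so yourself. The concrete gap is precisely the ``interface'' you flag as the main obstacle: nothing in the sketch establishes that the long cycles can be packed so that the leave is one of the specific host graphs to which a known uniform-length or short-cycle theorem applies (with the right degrees and parities at every vertex), nor that the proposed cycle switches can always be executed and always terminate at an arbitrary prescribed length profile, including lengths near $n/2$. Supplying those two facts is the content of the (long and technical) Bryant--Horsley--Pettersson paper; the regime decomposition, the appeal to Alspach--Gavlas and \v{S}ajna for the uniform case, and the switching heuristic are all pointed in the right direction, but as written they do not constitute an argument. For the purposes of this paper the correct move is the one the authors make --- cite the result --- and your proposal should be read as an annotated citation, not a proof.
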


Can this result be generalized to complete multigraphs? It is easy to see that if there exists a $(c_1,c_2,\ldots,c_k)$-cycle decomposition of the complete multigraph $\mu K_{n}$, then the following necessary conditions hold:
\begin{description}
\item[(B1)] $2\leq c_i\leq n$ for all $i=1,2,\ldots,k$ and
\item[(B2)] $\sum_{i=1}^k c_i=n \lfloor \frac{\mu (n-1)}{2} \rfloor$.
\end{description}
In the case of cycles of length 2, two more necessary conditions are required (see \cite{BryHorMaeSmi,Bry} and Lemma~\ref{lem:B5}):
\begin{description}
\item[(B3)] If $\mu$ is odd, then $\sum_{c_i \ge 3} c_i \ge n \lfloor \frac{n-1}{2} \rfloor$; and
\item[(B4)] if $\mu$ is even, then $\max\{ c_i: i=1,\ldots,k \} \le \frac{1}{2}\mu\binom{n}{2}-k+2$.
\end{description}
Bryant, Horsley, Maenhaut, and Smith \cite{BryHorMaeSmi} showed that Conditions (B1)--(B3) are also sufficient in the following cases.

\begin{theo}\label{BryHorMaeSmi} {\rm \cite{BryHorMaeSmi}}
Let $\mu$, $n$, $k$, and $c_1 \le  \ldots \le c_k$ be positive integers satisfying Conditions {\rm (B1)--(B3)}, with $n \ge 3$. In addition, assume that
\begin{enumerate}
\item $c_1 \ge \lfloor \frac{n+3}{2} \rfloor$; or
\item $c_k=c_{k-1} \le \lfloor \frac{n+1}{2} \rfloor$; or
\item $c_k=c_{k-1}+1 \le \lfloor \frac{n+2}{2} \rfloor$.
\end{enumerate}
Then the complete multigraph $\mu K_{n}$ admits a $(c_1,c_2,\ldots,c_k)$-cycle decomposition.
\end{theo}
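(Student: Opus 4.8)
The plan is to obtain the required decomposition by transforming a ``standard'' decomposition of $\mu K_n$ through a repertoire of local exchange moves, with the three hypotheses (1)--(3) singling out exactly the regimes in which the transformation can be carried to completion. Several base decompositions are cheap to produce: a decomposition of $\mu K_n$ (minus a $1$-factor when $\mu(n-1)$ is odd) into Hamilton cycles, from classical Hamilton-decomposition results for $K_n$; a decomposition into $2$-cycles plus a short correction when $\mu\ge 2$; and, since Alspach's theorem (Theorem~\ref{BryHorsPett}) is now available for simple complete graphs, when $\mu$ is odd a split $\mu K_n=K_n\sqcup(\mu-1)K_n$ in which one copy of $K_n$ is decomposed into cycles whose lengths are dictated by the list $(c_1,\dots,c_k)$ together with Condition (B3), the even part $(\mu-1)K_n$ being kept for flexibility.

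The exchange moves I would develop come in three flavours. A \emph{segment swap}: given two cycles of the current decomposition passing through two common vertices $u,v$, split each at $u$ and $v$ and recombine the four resulting $uv$-paths in the other pairing; this replaces two lengths $a,b$ by $a',b'$ with $a'+b'=a+b$, provided the recombined paths are internally disjoint, which holds as soon as the two cycles share sufficiently many vertices. A \emph{chord exchange}: using a second copy of an edge $uv$ made available by the multiplicity $\mu\ge 2$ or by a $2$-cycle in the decomposition, reroute so that a cycle through $u$ and $v$ is broken into two shorter cycles (or two cycles sharing an edge are fused into a longer cycle and a $2$-cycle). A \emph{triangle trade}: three $2$-cycles on a common triangle are interchanged with two parallel triangles, and analogously for $\ell$-cycles; this is the basic device for changing the \emph{number} of cycles, since a long cycle uses each of its edges once while $2$-cycles use edges in pairs. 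Each move carries an overlap hypothesis guaranteeing that the outputs are genuine (simple) cycles, and verifying these hypotheses while simultaneously keeping every length in $[2,n]$ and steering the cycle count to $k$ is the technical heart of the argument.

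Under hypothesis (1) all $c_i\ge\lfloor\frac{n+3}2\rfloor$, so any two cycles of such lengths share at least $a+b-n\ge 3$ vertices; the segment swap is therefore unconditionally available, and one can drive a Hamilton base toward $(c_1,\dots,c_k)$ by greedy length redistribution, using chord exchanges and triangle trades (and, when $\mu$ is odd, the reserved copy of $K_n$ via Theorem~\ref{BryHorsPett}) to adjust the cycle count. Under hypotheses (2) and (3) the two longest cycles are comparatively short; trading the pair $(c_{k-1},c_k)$ against each other and against a $2$-cycle supplied by $(\mu-1)K_n$ (the latter forced in case (3), where $c_{k-1}+c_k$ is odd) reduces to a shorter list for which the same machinery, or a base decomposition, applies, provided one checks that Conditions (B1)--(B3) persist through each reduction. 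A finite set of degenerate configurations --- small $n$, or $\mu K_n$ admitting only decompositions with very few cycles --- would be disposed of by direct construction or computer search.

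The step I expect to be the principal obstacle is the joint control of the cycle count and the ``parity of edge use'': because creating one more short cycle typically forces a compensating change elsewhere, landing on the \emph{exact} multiset $(c_1,\dots,c_k)$ rather than something off by a triangle trade or a segment swap is delicate, and it is here that the extra hypotheses, together with (B3), earn their place by ensuring that a correction cycle of a legal length is always on hand for the chord and triangle exchanges. A secondary difficulty is the overlap hypothesis for the segment swap when some target cycles are only moderately long --- this is why hypothesis (1) insists on $\lfloor\frac{n+3}2\rfloor$ rather than merely exceeding $n/2$.
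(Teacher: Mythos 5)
First, a point of orientation: the paper does not prove Theorem~\ref{BryHorMaeSmi} at all --- it is quoted verbatim from \cite{BryHorMaeSmi} and used as a black box, so there is no in-paper argument to compare yours against. What you have written would therefore need to reconstruct a substantial published theorem from scratch, and as it stands it does not: it is a research plan, not a proof. You introduce three exchange moves, state that ``verifying these hypotheses \ldots is the technical heart of the argument,'' and then close by naming the two places where you expect the argument to break (controlling the exact cycle count, and the overlap hypothesis for moderately long cycles) without resolving either. Those are precisely the points where the real proof does its work, so the proposal has a gap exactly where the theorem is hard.

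There is also a concrete error in the one step you do try to justify. Under hypothesis (1) you argue that two cycles of lengths $a,b \ge \lfloor \frac{n+3}{2} \rfloor$ share at least $a+b-n \ge 3$ vertices and that the segment swap is therefore ``unconditionally available.'' For $n$ even the bound only gives $a+b-n \ge 2$, but the more serious problem is that sharing common vertices does not make the cross-recombined arcs internally disjoint --- if anything the opposite is true: the more vertices two cycles share, the more likely an arc of one passes through the interior of an arc of the other, so the recombination at $u$ and $v$ generically produces closed trails with repeated vertices rather than two cycles of lengths $a'$ and $b'$. The switching arguments that actually prove results of this type (the Bryant--Horsley cycle-switching machinery underlying \cite{BryHorMaeSmi} and \cite{BryHorsPett}) avoid this by exchanging carefully chosen paths and by counting/parity arguments over all possible switches, not by a vertex-overlap estimate. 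Similarly, the role of Condition (B3) --- that $2$-cycles consume parallel edge pairs and hence at most $\frac{1}{2}(\mu-1)\binom{n}{2}$ edges can go to $2$-cycles when $\mu$ is odd --- is gestured at but never turned into the bookkeeping that would let your reductions in cases (2) and (3) terminate with Conditions (B1)--(B3) intact. In short, the overall flavour (local exchanges on a base decomposition) is not unreasonable, but none of the moves is established, and the one quantitative claim made in their support is false as stated.
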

It appears that is has now been proved \cite{Bry} that Conditions (B1)--(B4) are in fact sufficient in all cases.

In this paper, we are concerned with the following generalization of Alspach's Conjecture.

\begin{problem}\label{que:generalize}{\rm
Determine the necessary and sufficient conditions on parameters $\lambda$, $m$, $n$, and $c_1, \dots,c_k$ for the complete equipartite multigraph $\lambda K_{n \times m}$ (with $n$ parts of cardinality $m$) to admit a $(c_1,c_2,\ldots,c_k)$-cycle decomposition.}
\end{problem}
Observe that Conditions (C1)--(C4) below are necessary for the existence of a $(c_1,c_2,\ldots,c_k)$-cycle decomposition of $\lambda K_{n \times m}$. While Conditions (C1)--(C3) are easy to see, Condition (C4) will be proved in Lemma~\ref{lem:B5} in more generality.
\begin{description}
\item[(C1)] $2\leq c_i\leq mn$ for all $i=1,2,\ldots,k$;
\item[(C1$'$)] if $n=2$, then $c_1, \dots,c_k$ are all even;
\item[(C2)] $\sum_{i=1}^k c_i=mn \lfloor \frac{\lambda m (n-1)}{2} \rfloor$;
\item[(C3)] if $\lambda$ is odd, then $\sum_{c_i \ge 3} c_i \ge m^2 \binom{n}{2}$; and
\item[(C4)] if $\lambda$ is even, then $\max\{ c_i: i=1,\ldots,k \} \le \frac{1}{2}\lambda m^2 \binom{n}{2}-k+2$.
\end{description}

Many partial solutions to Problem~\ref{que:generalize} are known for complete equipartite graphs $K_{n \times m}$ of even degree and uniform cycle lengths, that is, for $c_1=\ldots=c_k=c$. Necessary and sufficient conditions have been determined for $c=3$ \cite{Han75}; $c=5$ \cite{BillHoffMae99}; $c \in \{4,6,8 \}$ \cite{BilCav}; prime $c \ge 7$ \cite{ManPau06}; $c$ twice a prime \cite{Smith08JCD}, three times a prime \cite{Smith09AuJC}, and prime square \cite{Smith10JCD}; and for $c$ small relative to the number of parts $m$ \cite{CavSmi10, CavSmi11, Hors12}. Problem~\ref{que:generalize} has also been completely solved for graphs $K_{n \times m}$ of even degree and uniform cycle lengths when the number of parts $n$ is small; namely for $n=2$ \cite{DSot81}; $n=3$ \cite{Cav}, $n=4$ \cite{BilCavSmi09}, and $n=5$ \cite{BilCavSmi10}. For complete equipartite multigraphs $\lambda K_{n \times m}$ of even degree and uniform cycle lengths, Problem~\ref{que:generalize} has been solved for $c=5$ \cite{BillHoffMae99} and $c$ prime \cite{Smi2010}.

Variable (but very specific) cycle lengths in the bipartite graph $K_{m,m}$ were considered in \cite{ArchDebDinGav, ChouFuHu99, ChouFu07, Hors12, MaPuShen}, and in the bipartite multigraph $2K_{m,m}$, in \cite{ChouFuHu00}. The most comprehensive result to date, contained in \cite{Hors12}, solves Problem~\ref{que:generalize} for cycle lengths satisfying $4 \le c_1\le c_2 \le \ldots c_k \le \min(m,3c_{k-1})$.

For complete equipartite multigraphs $\lambda K_{n \times m}$ with $n>2$, the only known result for variable cycle lengths  gives necessary and sufficient conditions when  $\lambda=1$, $\lambda m(n-1)$ is even, and $c_i \in \{ 4,5 \}$ for all $i$ \cite{HuaFu}. No results are known for the case $\lambda >1$, $n>2$, and variable cycle lengths.

\bigskip

The main goal of this paper is to offer a partial solution to Problem~\ref{que:generalize} in the following form.

\begin{theo}\label{mainthmultialspach}
Let $\lambda$, $m$, $n$, and $c_1, \dots,c_k$ be positive integers  such that there exists a $(c_1,c_2,\ldots,c_k)$-cycle decomposition of $\lambda m K_n$. Then the complete equipartite multigraph $\lambda K_{n \times m}$ admits a $(c_1m,c_2m,\ldots,c_km)$-cycle decomposition.
\end{theo}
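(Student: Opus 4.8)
The plan is to realize $\lambda K_{n\times m}$ as a detachment of a suitably edge-coloured amalgamated multigraph built from the given $(c_1,\dots,c_k)$-cycle decomposition of $\lambda m K_n$. Write $\mathcal D=\{C_1,\dots,C_k\}$ for that decomposition, together with a $1$-factor $F$ in case $\lambda m(n-1)$ is odd, and let $V(K_n)=\{x_1,\dots,x_n\}$. First I would form the multigraph $G$ on $\{x_1,\dots,x_n\}$ obtained from $\lambda mK_n$ by replacing each edge with $m$ parallel copies. Since each pair $\{x_i,x_j\}$ carries $\lambda m$ edges in $\lambda mK_n$, we get $G=\lambda m^2K_n$, which is exactly the amalgamation of $\lambda K_{n\times m}$ obtained by contracting each part to a single vertex. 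Next, colour $G$ by giving colour $i$ to all $m$ copies of every edge of $C_i$ (for $i=1,\dots,k$), and a separate colour to all $m$ copies of every edge of $F$ when $F$ is present; thus colour class $i$ is the multigraph $mC_i$, which is connected, has $c_i$ vertices each of degree $2m$, and has $c_im$ edges, while the extra colour class is $mF$.

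Now I would invoke an equitable detachment theorem to split every vertex $x_j$ of $G$ into $m$ vertices $v_{j,1},\dots,v_{j,m}$ — these will form the $j$-th part $P_j$ of the detached graph $H$ — requiring the detachment to be equitable on the whole graph, on every colour class, and on edge multiplicities between pairs of new vertices. Because $G$ is loopless, $H$ has no edge inside any $P_j$; and because every quotient that arises is an integer ($\lambda m^2/m^2=\lambda$, $2m/m=2$, $m/m=1$, $\lambda m^2(n-1)/m=\lambda m(n-1)$), equitability forces the detachment to be perfectly uniform. Hence between any two new vertices in distinct parts there are exactly $\lambda$ edges, so $H=\lambda K_{n\times m}$; in colour class $i$ each new vertex coming from a vertex of $C_i$ has degree exactly $2$, so colour class $i$ of $H$ is a disjoint union of cycles with $c_im$ vertices and $c_im$ edges in total; and in the extra colour class every new vertex has degree $1$, so that class is a perfect matching of $H$, i.e.\ a $1$-factor. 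Since the colour classes partition $E(G)$ and a detachment is a bijection on edge sets, these classes partition $E(\lambda K_{n\times m})$, so all that remains is to guarantee that each colour class $i$ is a \emph{single} cycle of length $c_im$.

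This last point is the crux of the argument. Colour class $i$ of $H$ is a union of cycles of total length $c_im$, and it is a single $c_im$-cycle precisely when it is connected. Since each $mC_i$ is connected in $G$, what is needed is a detachment theorem that, besides equitability, permits one to prescribe that a chosen family of connected colour classes — here $C_1,\dots,C_k$ — stay connected after detachment, while leaving the $1$-factor class unconstrained; this is the same mechanism underlying, for example, Hilton's amalgamation proof of the existence of Hamilton decompositions of $K_n$. Concretely, I would detach the vertices $x_1,\dots,x_n$ one at a time and verify at each step that the hypotheses of the connectivity-controlled detachment theorem hold: after a vertex of $C_i$ has been split so that colour class $i$ remains a single cycle, deleting the next vertex of $C_i$ leaves a path, so any splitting keeps it connected. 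The delicate case is a colour class arising from a $2$-cycle of $\mathcal D$ (equivalently, from two parallel edges of $\lambda mK_n$): when the second of its two amalgamated endpoints is split, the class momentarily breaks into $m$ pendant double edges, and the $m$ new vertices must be inserted so as to thread these into one $2m$-cycle — it is here that the full strength of the connectivity-aware detachment theorem is required. Checking that the theorem applies at every stage, and that no additional degeneracy occurs for small $n$ or $m$ or when some $c_i=2$, completes the proof.
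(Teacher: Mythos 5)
Your overall strategy is exactly the paper's: amalgamate $\lambda K_{n\times m}$ to $\lambda m^2 K_n$, colour it by the given cycle decomposition (each class an $m$-fold cycle, plus an $m$-fold $1$-factor in the odd-degree case), and detach each vertex into $m$ vertices so that multiplicities, colour-class degrees, and connectivity are preserved; the equitability bookkeeping in your second paragraph matches Properties (P1)--(P4b) of Theorem~\ref{maininductiveseq}. However, the step you yourself call the crux --- keeping each colour class connected through the detachment --- is delegated to an unnamed ``connectivity-controlled detachment theorem,'' and the justification you sketch for why its hypotheses hold is not correct. You split each amalgamated vertex into all $m$ sub-vertices at once and assert that, once one vertex of $C_i$ has been handled, ``deleting the next vertex of $C_i$ leaves a path, so any splitting keeps it connected.'' At an intermediate stage the colour class is not a cycle but a connected even multigraph in which each unsplit vertex still has degree $2m$; when such a vertex is split into $m$ vertices of degree $2$, an unlucky assignment can give one new vertex two edges that close off a short cycle, disconnecting the class --- so it is emphatically not true that ``any splitting keeps it connected.'' Preventing exactly this is where the paper does its real work: it splits off one sub-vertex at a time, characterizes when such a split preserves connectivity (Lemma~\ref{simpleconlemma}: some component of $\mathcal G(i)-\alpha$ must receive at least one but not all of $\alpha$'s edges to it), and then modifies the equitable colouring of the auxiliary bipartite graph (the $B_2$, $B_2'$, $B_1$ construction) so that this criterion holds simultaneously for every colour, via a case analysis on how the four $B_2$-edges at $x_i$ distribute over the components of $\mathcal G(i)-\alpha$. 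If you intend instead to quote a general detachment theorem that preserves connectivity of prescribed colour classes, you must name it and verify its hypotheses; as written, the connectivity claim --- the only nontrivial part of the theorem --- is unproved, and the $2$-cycle case is only flagged as ``delicate'' rather than resolved.
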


The following corollary is immediate.

\begin{cor}
Let $\lambda$, $m$, $n$, and $c_1, \dots,c_k$ be positive integers, and let $\mu=\lambda m$. Assume that $\mu$, $n$, $k$, and $c_1, \ldots,c_k$  satisfy the conditions of Theorem~\ref{BryHorMaeSmi}. Then the complete equipartite multigraph $\lambda K_{n \times m}$ admits a $(c_1m,c_2m,\ldots,c_km)$-cycle decomposition.
\end{cor}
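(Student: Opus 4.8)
The plan is to derive this corollary by chaining two results already in hand, with no new construction of our own: Theorem~\ref{BryHorMaeSmi}, which supplies a $(c_1,\dots,c_k)$-cycle decomposition of a complete \emph{multigraph}, and Theorem~\ref{mainthmultialspach}, whose amalgamation--detachment machinery converts any such decomposition of $\lambda m K_n$ into a $(c_1m,\dots,c_km)$-cycle decomposition of the complete \emph{equipartite} multigraph $\lambda K_{n\times m}$. In short, Theorem~\ref{BryHorMaeSmi} manufactures exactly the hypothesis that Theorem~\ref{mainthmultialspach} consumes.

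Concretely, I would set $\mu=\lambda m$ and note that, by the hypothesis of the corollary, $\mu$, $n$, $k$, and $c_1\le\dots\le c_k$ satisfy Conditions (B1)--(B3) together with one of the three extra assumptions of Theorem~\ref{BryHorMaeSmi}, with $n\ge 3$. Applying that theorem verbatim yields a $(c_1,\dots,c_k)$-cycle decomposition of $\mu K_n=\lambda m K_n$ --- that is, a decomposition into cycles of the stated lengths when $\lambda m(n-1)$ is even, and such cycles plus a 1-factor when $\lambda m(n-1)$ is odd, in accordance with our convention for the term. Feeding this decomposition into Theorem~\ref{mainthmultialspach} immediately produces a $(c_1m,\dots,c_km)$-cycle decomposition of $\lambda K_{n\times m}$, which is the assertion to be proved.

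I do not anticipate a genuine obstacle here, since all the substance lies inside Theorem~\ref{mainthmultialspach}. The only point deserving a moment's attention is parity consistency: $\lambda m K_n$ and $\lambda K_{n\times m}$ have the same degree $\lambda m(n-1)$, so the ``plus a 1-factor'' clause is triggered for both or for neither. This matches the form of Theorem~\ref{mainthmultialspach} (the presence of a 1-factor is carried faithfully from its hypothesis to its conclusion), so the object we supply and the object we read off are of the same type, and no mismatch can arise.
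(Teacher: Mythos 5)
Your proof is correct and is exactly the argument the paper intends: the paper labels this corollary ``immediate,'' meaning precisely the chaining of Theorem~\ref{BryHorMaeSmi} (to produce a $(c_1,\dots,c_k)$-cycle decomposition of $\mu K_n=\lambda mK_n$) with Theorem~\ref{mainthmultialspach}. Your remark on parity consistency is a sensible sanity check but raises no issue, since both graphs have degree $\lambda m(n-1)$.
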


This paper is organized as follows. In Section 2 we give the necessary definitions, terminology, and technical tools that will be used in Section 3 to prove our main result, Theorem~\ref{mainthmultialspach}. The techniques used in its proof are taken a step further to construct some other, related cycle decompositions of complete equipartite multigraphs in Section 4. Finally, in Section 5, we give all possible cycle decompositions of the complete multigraphs with at most four vertices, and as a corollary using Theorem~\ref{mainthmultialspach}, all possible decompositions of the complete equipartite multigraphs $\lambda K_{n \times m}$ with at most four parts into cycles of lengths divisible by $m$.

\section{Preliminaries}

All graphs in this paper are assumed to be finite, loopless, and undirected, often with multiple edges. As usual, the symbol $K_n$ denotes the complete graph with $n$ vertices, and $K_{n\times m}$ denotes the complete $n$-partite graph with all parts of cardinality $m$. For any simple graph ${\cal G}$ and positive integer $\lambda$, the symbol $\lambda {\cal G}$ denotes the multigraph with the multiplicity of every edge equal to $\lambda$ and with the underlying simple graph isomorphic to ${\cal G}$.

Let $\mathcal G=(V,E)$ be a graph, and $u$ and $v$  two distinct vertices of $\mathcal G$. Then $d_{\mathcal G}(u)$, $N_{\mathcal G}(u)$, and $m_{\mathcal G}(u,v)$ will denote the degree of $u$, the set of neighbours of $u$, and the number of edges between $u$ and $v$, respectively,  in $\mathcal G$. Similarly, if $U \subseteq V-\{ u \}$, then $m_{\mathcal G}(u,U)$ denotes the number of edges of $\mathcal G$ incident with $u$ and a vertex in $U$. If $V' \subseteq V$, then $\mathcal G -V'$ denotes the graph obtained from $\mathcal G$ by deleting all vertices in $V'$, as well as all edges incident with a vertex in $V'$. If $V'=\{ v \}$, then we write simply $\mathcal G - v$ instead of $\mathcal G - \{ v \}$.

Two concepts will be used in the proof of Theorem \ref{mainthmultialspach} as the main tools. The first is the amalgamation-detachment technique, first developed in \cite{H2, HR}, and more recently surveyed in \cite{BahRod}. Informally speaking, an {\it $\alpha$-detachment} of a graph $\mathcal G$ is any graph obtained by splitting a vertex $\alpha$ of $\mathcal G$ into one or more vertices, and dividing the  edges incident with $\alpha$ among the resulting (sub)vertices. In particular, in an $\alpha$-detachment $\mathcal G'$ of $\mathcal G$ in which we split vertex $\alpha$ into vertices $\alpha$ and $\beta$,  each edge of the form $\{\alpha,u\}$ in $\mathcal G$ will give rise to an edge of the form either $\{\alpha,u\}$ or $\{\beta,u\}$ in $\mathcal G'$.

The following lemma will be crucial in the induction step of the proof of Theorem~\ref{maininductiveseq}.

\begin{lemma} \label{simpleconlemma}
Let  $\mathcal G$ be a connected graph, and $\mathcal G'$ be an $\alpha$-detachment of $\mathcal G$ obtained by splitting a vertex $\alpha$ into two vertices $\alpha$ and $\beta$.
Then $\mathcal G'$ is connected if and only if $1\leq m_{\mathcal G'}(\beta, V({\mathcal H}))<m_{\mathcal G}(\alpha, V({\mathcal H}))$ for some connected component ${\mathcal H}$ of $\mathcal G - \alpha$.
\end{lemma}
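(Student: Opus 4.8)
The plan is to prove this purely combinatorially by analyzing what happens to connectivity when we split $\alpha$ into $\alpha$ and $\beta$. First I would fix notation: let $\mathcal{H}_1, \dots, \mathcal{H}_t$ be the connected components of $\mathcal{G} - \alpha$. Since $\mathcal{G}$ is connected, $\alpha$ is adjacent to at least one vertex in each $\mathcal{H}_i$, i.e. $m_{\mathcal{G}}(\alpha, V(\mathcal{H}_i)) \ge 1$ for every $i$. In $\mathcal{G}'$ the component structure of $\mathcal{G}' - \{\alpha, \beta\}$ is exactly $\mathcal{H}_1, \dots, \mathcal{H}_t$ again (detaching $\alpha$ only moves edges incident with $\alpha$, so it does not alter $\mathcal{G} - \alpha$), and $\mathcal{G}'$ is connected if and only if the graph obtained by contracting each $\mathcal{H}_i$ to a single vertex $h_i$ and keeping $\alpha, \beta$ is connected; that auxiliary graph has vertex set $\{\alpha, \beta, h_1, \dots, h_t\}$, with $\alpha$ joined to $h_i$ by $m_{\mathcal{G}'}(\alpha, V(\mathcal{H}_i))$ edges, $\beta$ joined to $h_i$ by $m_{\mathcal{G}'}(\beta, V(\mathcal{H}_i))$ edges, and $m_{\mathcal{G}'}(\alpha, V(\mathcal{H}_i)) + m_{\mathcal{G}'}(\beta, V(\mathcal{H}_i)) = m_{\mathcal{G}}(\alpha, V(\mathcal{H}_i))$ for each $i$. (There may also be edges directly between $\alpha$ and $\beta$ arising from loops, but loopless-ness rules these out, or at worst they only help connectivity and can be handled separately.)

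Next I would prove the two directions. For the "if" direction, suppose $1 \le m_{\mathcal{G}'}(\beta, V(\mathcal{H}_j)) < m_{\mathcal{G}}(\alpha, V(\mathcal{H}_j))$ for some $j$. The strict inequality forces $m_{\mathcal{G}'}(\alpha, V(\mathcal{H}_j)) \ge 1$, so $\mathcal{H}_j$ is connected to both $\alpha$ and $\beta$, hence $\{\alpha\} \cup \{\beta\} \cup V(\mathcal{H}_j)$ lies in one component of $\mathcal{G}'$; and every other $\mathcal{H}_i$ is joined to $\alpha$ or to $\beta$ (since the two multiplicities sum to $m_{\mathcal{G}}(\alpha, V(\mathcal{H}_i)) \ge 1$), so it too lies in that same component. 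Thus $\mathcal{G}'$ is connected. For the "only if" direction I would argue the contrapositive: assume that for every component $\mathcal{H}_i$ we have either $m_{\mathcal{G}'}(\beta, V(\mathcal{H}_i)) = 0$ or $m_{\mathcal{G}'}(\beta, V(\mathcal{H}_i)) = m_{\mathcal{G}}(\alpha, V(\mathcal{H}_i))$ (equivalently $m_{\mathcal{G}'}(\alpha, V(\mathcal{H}_i)) = 0$). Then partition the components into those attached only to $\alpha$ and those attached only to $\beta$; in the contracted auxiliary graph this separates $\alpha$ from $\beta$ unless one of these classes is empty. If some component is attached to $\beta$ then $\beta$'s side is nonempty; and since $\beta$ must receive at least one edge in a genuine detachment — or, more carefully, if $\beta$ is isolated then $\mathcal{G}'$ is certainly disconnected (as $\beta$ has no neighbours, $t \ge 1$) — in all cases we conclude $\mathcal{G}'$ is disconnected. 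I should be slightly careful about the degenerate cases (e.g. $\beta$ isolated, or all edges at $\alpha$ go to $\beta$ leaving $\alpha$ isolated), but each of these immediately yields a disconnected $\mathcal{G}'$ and is consistent with the lemma, since in those cases the displayed inequality fails for all $i$.

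The main obstacle, such as it is, will be bookkeeping the boundary cases cleanly rather than any real difficulty: specifically, making sure the statement "$\mathcal{G} - \alpha$ has the same components as $\mathcal{G}' - \{\alpha,\beta\}$" is used correctly, and handling the possibility $t$ could be $1$, or that $\mathcal{G}-\alpha$ is empty (in which case $\mathcal{G}'$ has at most the two vertices $\alpha,\beta$ with no edges between them, hence is disconnected, and indeed there is no component $\mathcal{H}$ to satisfy the condition, consistent with the lemma). I would streamline everything by introducing the contracted multigraph $\mathcal{G}^*$ on $\{\alpha,\beta,h_1,\dots,h_t\}$ at the outset, observing $\mathcal{G}'$ connected $\iff$ $\mathcal{G}^*$ connected, and then the whole argument reduces to an elementary observation about when a "double star" type multigraph on $\{\alpha,\beta\} \cup \{h_i\}$ is connected: it is connected precisely when some $h_i$ sees both $\alpha$ and $\beta$, which (using that the edge-multiplicities to $h_i$ sum to the fixed positive number $m_{\mathcal{G}}(\alpha,V(\mathcal{H}_i))$) is exactly the displayed condition.
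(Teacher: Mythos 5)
Your proposal is correct and follows essentially the same route as the paper: both arguments rest on the observations that the components of $\mathcal G'-\{\alpha,\beta\}$ are exactly those of $\mathcal G-\alpha$, that each such component must be attached to at least one of $\alpha,\beta$ because $m_{\mathcal G'}(\alpha,V(\mathcal H))+m_{\mathcal G'}(\beta,V(\mathcal H))=m_{\mathcal G}(\alpha,V(\mathcal H))\ge 1$, and that $\mathcal G'$ is connected precisely when some component is attached to both. Your contracted auxiliary graph and the explicit treatment of degenerate cases are just extra bookkeeping on top of the paper's (more terse) version of the same argument.
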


\begin{proof}
Let  $\mathcal G$ and $\mathcal G'$ be as in the statement of the lemma. Note that the connected components of $\mathcal G'-\{ \alpha, \beta \}$ are precisely the connected components of $\mathcal G - \alpha$. If $\mathcal G'$ is connected, then some component ${\mathcal H}$ of $\mathcal G'-\{ \alpha, \beta \}$ must contain neighbours of both $\alpha$ and $\beta$, and conversely. In other words, $\mathcal G'$ is connected if and only if $m_{\mathcal G'}(\alpha,V({\mathcal H})) \ge 1$ and $m_{\mathcal G'}(\beta,V({\mathcal H}))\ge 1$ for some connected component ${\mathcal H}$ of $\mathcal G - \alpha$. Since $m_{\mathcal G}(\alpha,V({\mathcal H}))=m_{\mathcal G'}(\alpha,V({\mathcal H}))+ m_{\mathcal G'}(\beta,V({\mathcal H}))$, the statement of the lemma then follows immediately.
\end{proof}

The second main tool in the proof of Theorem \ref{mainthmultialspach} is the concept of edge colouring, in particular, de Werra's Theorem~\ref{BEElemma} below.
A {\em $k$-edge-colouring} of a graph $\mathcal G=(V,E)$ is a mapping $f: E\rightarrow K$, where $K=\{1,\ldots,k\}$ is the set of $k$ \textit{colours}. For any $i \in K$, the symbol $\mathcal G(i)$ will denote the spanning subgraph of $\mathcal G$ whose edge set is the set of all edges of colour $i$; we call such a spanning subgraph a \textit{colour class} of $\mathcal G$ with respect to the edge colouring $f$. Observe that a colour class may have (many)  isolated vertices.

A $k$-edge-colouring of a graph $\mathcal G=(V,E)$ is called {\it equitable} if $|d_{\mathcal G(i)}(u)-d_{\mathcal G(j)}(u)|\leq 1$ for all $i,j \in K$ and $u\in V$; that is, if every vertex is incident with ``almost the same'' number of edges of each colour.

The following extremely useful result by de Werra \cite{deWerra75BEE} guarantees existence of an equitable $k$-edge-colouring in any bipartite graph. For completeness, and since publication \cite{deWerra75BEE} is not available to us, we present a  proof.

\begin{theo}\label{BEElemma} {\rm  \cite{deWerra75BEE}}
Let $\mathcal G=(V,E)$ be a bipartite graph and $k$ a positive integer. Then $\mathcal G$ admits an equitable $k$-edge-colouring.
\end{theo}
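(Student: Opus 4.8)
The plan is to prove de Werra's theorem by induction on the number of edges $|E|$, removing one carefully chosen path or cycle at a time and recolouring it so as to repair any imbalance. Since the statement is trivial when $|E| \le 1$, assume $|E| \ge 2$ and that the result holds for all bipartite graphs with fewer edges.

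First I would reduce to the case where $\mathcal G$ has no equitable colouring obtained by a trivial modification; more precisely, start from \emph{any} $k$-edge-colouring $f$ of $\mathcal G$ (for instance, an arbitrary one) and define, for each vertex $u$, the deficiency vector $(d_{\mathcal G(1)}(u), \dots, d_{\mathcal G(k)}(u))$. If $f$ is not equitable, there is a vertex $u$ and colours $i, j \in K$ with $d_{\mathcal G(i)}(u) \ge d_{\mathcal G(j)}(u) + 2$. Consider the subgraph $\mathcal G(i) \cup \mathcal G(j)$ spanned by the edges of colours $i$ and $j$; this is bipartite, so each of its connected components is a path or an even cycle, and along any such path or cycle the two colours alternate. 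A cycle has equal numbers of $i$- and $j$-edges at every vertex, so the imbalance at $u$ must come from a path component $P$ that begins and ends with a colour-$i$ edge and has $u$ as one of its endpoints (or from several such; pick one). Swapping the colours $i$ and $j$ along $P$ decreases $d_{\mathcal G(i)}(u)$ by one and increases $d_{\mathcal G(j)}(u)$ by one, while at the other endpoint $v$ of $P$ it changes the $(i,j)$-balance in the same direction, and at every internal vertex of $P$ it leaves both degrees unchanged. The key point is that this swap does not create any \emph{new} imbalance: at $v$ the colour counts $d_{\mathcal G(i)}(v)$ and $d_{\mathcal G(j)}(v)$ were such that $v$ was an endpoint of an $i$-to-$i$ path, so $d_{\mathcal G(i)}(v) \ge d_{\mathcal G(j)}(v) + 1$ before the swap, hence afterwards $d_{\mathcal G(i)}(v) \ge d_{\mathcal G(j)}(v) - 1$, still within tolerance. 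Thus each such swap strictly decreases the potential $\Phi(f) = \sum_{u \in V}\sum_{i<j} \bigl(d_{\mathcal G(i)}(u) - d_{\mathcal G(j)}(u)\bigr)^2$ (or a similar non-negative integer-valued potential measuring total imbalance), so after finitely many swaps we reach an equitable colouring.

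Alternatively, and perhaps more cleanly for a self-contained write-up, I would run the induction on edges directly: pick a maximal path or a cycle $C$ in $\mathcal G$ using the standard fact that a connected graph with a vertex of degree $\ge 2$ contains such a subgraph with an even number of edges or one whose two end-edges are incident to distinct vertices; more simply, reduce $\mathcal G$ to a graph $\mathcal G^-$ by deleting a suitable pair of edges forming a path of length $2$ through a maximum-degree vertex (or a single edge, handling parity). Apply the inductive hypothesis to $\mathcal G^-$ to get an equitable $k$-edge-colouring, then add the deleted edges back and assign them the least-used colours at their endpoints; a short case analysis on the parity of the degrees shows the result stays equitable. The swapping argument of the previous paragraph is then the tool used to absorb the at most one unit of imbalance that re-insertion can introduce.

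The main obstacle — and the step I would spend the most care on — is verifying that a colour swap along a maximal two-coloured path genuinely reduces total imbalance without introducing a new violation elsewhere, i.e. the termination/monotonicity argument. One must be careful that the chosen path really does end in two edges of the \emph{over-represented} colour at $u$ (so the swap helps at $u$), and that at the far endpoint the pre-swap degrees leave enough slack; this hinges on the bipartiteness, which forces the alternating structure and rules out odd cycles that could otherwise sabotage the parity bookkeeping. Once that lemma on colour-swapping paths is nailed down, the rest is routine.
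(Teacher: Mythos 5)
Your overall strategy --- repeatedly swapping two colours along a two-coloured subgraph so as to decrease a non-negative integer potential --- is the same as the paper's, but the structural claim on which your swap rests is false, and this is a genuine gap. You assert that the components of $\mathcal G(i)\cup\mathcal G(j)$ are paths and even cycles along which the two colours alternate. That would be true if each colour class were a matching (i.e.\ for a \emph{proper} edge-colouring), but an equitable colouring is not proper: a vertex may be incident with many edges of colour $i$ and many of colour $j$, so the components of $\mathcal G(i)\cup\mathcal G(j)$ can have arbitrarily large degrees and carry no alternating structure whatsoever. Consequently there need not exist a ``path component beginning and ending with a colour-$i$ edge'' at the unbalanced vertex $u$, and your argument at the far endpoint $v$ (that $d_{\mathcal G(i)}(v)\ge d_{\mathcal G(j)}(v)+1$ because $v$ terminates an $i$-to-$i$ path) has no basis.

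The repair is to take a \emph{maximal trail} starting at $u$ whose edges alternate between colours $i$ and $j$, with first edge of the over-represented colour $i$; such a trail always exists, and bipartiteness together with $d_{\mathcal G(i)}(u)>d_{\mathcal G(j)}(u)$ and maximality guarantees it is open, so the swap genuinely lowers the imbalance at $u$. But then the bookkeeping becomes the real work: the trail may revisit vertices many times; its terminal vertex $z$ has one edge change colour, and at both $u$ and $z$ the counts against every \emph{third} colour $\ell$ also shift by $\pm 1$. Proving that the total potential (your $\Phi$, or the paper's $\sum_{v}\sum_{i,j}\left(d_{ij}(v)+|d_{ij}(v)-1|-1\right)$) still strictly decreases requires checking the pairs $\{i,j\}$, $\{i,\ell\}$, $\{j,\ell\}$ separately at $u$ and at $z$, using that maximality forces $d_{\mathcal G(i)}(z)\ne d_{\mathcal G(j)}(z)$ and that the trail's last edge has the majority colour at $z$; this case analysis constitutes most of the paper's proof and is absent from yours. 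Your alternative inductive sketch (delete edges, recolour, reinsert) is too vague to assess and in any case leans on the same unproved swapping lemma.
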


\begin{proof}
The assertion clearly holds for $k=1$, hence we may assume $k \ge 2$.

For a  graph $\cal G=(V,E)$ and a fixed $k$-edge-colouring $f: E \rightarrow K$ of $\cal G$, let $d_i(v)=d_{\cal G(i)}(v)$ and $d_{ij}(v)=|d_i(v)-d_j(v)|$ for all $v \in V$ and $i,j \in K$. Define a parameter $\delta(\cal G,f)$ as follows:
$$\delta(\cal G,f)=\sum_{v \in V} \sum_{i,j \in K} \left(  d_{ij}(v) + |d_{ij}(v)-1| -1 \right).$$

{\it Claim:} $\delta(\cal G,f)=0$ if and only if $f$ is an equitable $k$-edge-colouring of $\cal G$.

{\it Proof of the claim:} Assume $f$ is an equitable $k$-edge-colouring of $\cal G$. Then, for all $v \in V$ and $i,j \in K$, we have $d_{ij}(v) \in \{ 0,1 \}$, and it easily follows that $\delta(\cal G,f)=0$.

Conversely, let $f$ be a $k$-edge-colouring of $\cal G$ with $\delta(\cal G,f)=0$. First observe that, for any  real number $s$, the quantity $|s| + |s-1|$ gives the sum of distances of $s$ from 0 and 1, and hence $|s| + |s-1| -1 \ge 0$. Thus $\sum_{v \in V} \sum_{i,j \in K} \left(  d_{ij}(v) + |d_{ij}(v)-1| -1 \right)=0$ implies that $d_{ij}(v) + |d_{ij}(v)-1|=1$ for all $v \in V$ and $i,j \in K$. Since $d_{ij}(v)$ is a non-negative integer, it follows that $d_{ij}(v) \in \{ 0,1 \}$, in other words, $f$ is an equitable $k$-edge-colouring of $\cal G$.

\medskip

Now suppose $\cal G$ is a bipartite graph that admits no equitable $k$-edge-colouring. Let $f$ be a $k$-edge-colouring of $\cal G$ that minimizes $\delta(\cal G,f)$. Since $f$ is not equitable, there exist a vertex $u \in V$ and colours $s,t \in K$ such that $d_s(u)-d_t(u) \ge 2$. Let $T$ be a maximal trail in ${\cal G}$ with initial vertex $u$, first edge in $\cal G(s)$, and edges alternately in $\cal G(s)$ and $\cal G(t)$. (Recall that a {\em trail} is an alternating sequence $v_0e_1v_1\ldots v_{m-1}e_mv_m$ of vertices and edges such that each edge $e_i$ has endpoints $v_{i-1}$ and $v_i$, and no edge in the sequence is repeated.) For any internal vertex $v$ of $T$, the trail enters $v$ with an edge of colour $s$ and exits with an edge of colour $t$, or vice-versa. Since $\cal G$ is bipartite, $d_s(u) \ne d_t(u)$, and $T$ is maximal, the trail $T$ cannot be closed. Thus its terminal vertex, call it  $z$, is distinct from $u$.

We construct a new $k$-edge-colouring $f'$ of ${\cal G}$ by swapping colours $s$ and $t$ of $f$ along the trail $T$. With respect to this new colouring $f'$, let $d_i'(v)=d_{\cal G(i)}(v)$ and $d_{ij}'(v)=|d_i'(v)-d_j'(v)|$ for all $v \in V$ and $i,j \in K$.

We now show that
\begin{eqnarray}\label{ineq1}
\sum_{i,j \in K} \left(  d_{ij}'(v) + |d_{ij}'(v)-1| -1 \right) \le \sum_{i,j \in K} \left(  d_{ij}(v) + |d_{ij}(v)-1| -1 \right)
\end{eqnarray}
for all $v \in V$, with strict inequality when $v=u$. This is obvious (with equality) for vertices $v \not\in \{ u,z \}$ since $d_i'(v)=d_i(v)$ for all $i \in K$. It is also clear that for $v \in \{ u,z \}$,
\begin{eqnarray}\label{ineq2}
d_{ij}'(v) + |d_{ij}'(v)-1| -1 \le d_{ij}(v) + |d_{ij}(v)-1| -1
\end{eqnarray}
 holds (with equality) for all $i,j \in K - \{ s,t \}$.

We now verify that strict inequality holds in (\ref{ineq2}) for $v=u$ and $\{ i,j \}=\{ s, t\}$. Since $d_s(u)-d_t(u) \ge 2$, we have $d_{st}'(u)=d_{st}(u)-2$. Note that as a consequence, $|d_{st}'(u)-1|=|d_{st}(u)-1|$ if $d_{st}(u)=2$, and $|d_{st}'(u)-1|=|d_{st}(u)-1|-2$ otherwise. In any case,
$$d_{st}'(u) + |d_{st}'(u)-1|-1 < d_{st}(u) + |d_{st}(u)-1|-1.$$

Next, we verify Inequality~(\ref{ineq2}) for $v=z$ and $\{ i,j \}=\{ s, t\}$. Since $T$ is a maximal trail alternating colours $s$ and $t$, for the terminal vertex $z$ of $T$, we must have either $d_s(z)>d_t(z)$ or $d_s(z)<d_t(z)$. Assume $d_s(z)>d_t(z)$. Then the last edge of the trail $T$ must be of colour $s$, and will be swapped to colour $t$ in $f'$. Hence $d_{st}'(z)=d_{st}(z)$ if $d_{st}(z)=1$, and $d_{st}'(z)=d_{st}(z)-2$ otherwise. Furthermore, $|d_{st}'(z)-1|=|d_{st}(z)-1|$ if $d_{st}(z) \in \{ 1,2 \}$, and $|d_{st}'(z)-1|=|d_{st}(z)-1|-2$ otherwise. In any case,
$$d_{st}'(z) + |d_{st}'(z)-1|-1 \le d_{st}(z) + |d_{st}(z)-1|-1.$$
A similar argument shows that Inequality~(\ref{ineq2}) holds for $v=z$ and $\{ i,j \}=\{ s, t\}$ when $d_s(z)<d_t(z)$.

Finally, we'll show that
\begin{eqnarray}\label{ineq3}
d_{s\ell}'(v) + |d_{s\ell}'(v)-1| + d_{t\ell}'(v) + |d_{t\ell}'(v)-1| \le d_{s\ell}(v) + |d_{s\ell}(v)-1| + d_{t\ell}(v) + |d_{t\ell}(v)-1|
\end{eqnarray}
for $v \in \{ u,z \}$ and any $\ell \in K-\{ s,t \}$.

Take any colour $\ell \in K-\{ s,t \}$. Since  $d_s'(u)=d_s(u)-1$ and $d_t'(u)=d_t(u)+1$, we have
$$d_{s\ell}'(u)=\left\{ \begin{array}{ll}
                        d_{s\ell}(u)-1 & \mbox{ if } d_\ell(u) < d_s(u) \\
                        d_{s\ell}(u)+1 & \mbox{ if } d_\ell(u) \ge d_s(u)
                        \end{array} \right.
\quad \mbox{ and } \quad
d_{t\ell}'(u)=\left\{ \begin{array}{ll}
                        d_{t\ell}(u)-1 & \mbox{ if } d_\ell(u) > d_t(u) \\
                        d_{t\ell}(u)+1 & \mbox{ if } d_\ell(u) \le d_t(u)
                        \end{array} \right..$$
Therefore, since $d_t(u)< d_s(u)$, we can see that
$$d_{s\ell}'(u) + d_{t\ell}'(u)  \le d_{s\ell}(u) + d_{t\ell}(u).$$               Furthermore, since $d_{s\ell}'(u)=d_{s\ell}(u)\pm 1$ and $d_{t\ell}'(u)=d_{t\ell}(u) \pm 1$, we also have
$|d_{s\ell}'(u)-1|=|d_{s\ell}(u)-1| \pm 1$ and
$|d_{t\ell}'(u)-1|=|d_{t\ell}(u)-1| \pm 1$.  Thus
$$|d_{s\ell}'(u)-1| + |d_{t\ell}'(u)-1| \le |d_{s\ell}(u)-1| + |d_{t\ell}(u)-1|$$
unless both $|d_{s\ell}'(u)-1|=|d_{s\ell}(u)-1| + 1$ and
$|d_{t\ell}'(u)-1|=|d_{t\ell}(u)-1| + 1$.
Now $|d_{s\ell}'(u)-1|=|d_{s\ell}(u)-1| + 1$ if and only if either $d_{s\ell}(u)=1$ or both $d_{s\ell}(u) \ge 2$ and $d_{s\ell}'(u)=d_{s\ell}(u)+1$; that is, if and only if either $d_{s\ell}(u)=1$ or both $d_{s\ell}(u) \ge 2$ and $d_{\ell}(u) \ge d_{s}(u)$. Similarly, $|d_{t\ell}'(u)-1|=|d_{t\ell}(u)-1| + 1$ if and only if either $d_{t\ell}(u)=1$ or both $d_{t\ell}(u) \ge 2$ and $d_{\ell}(u) \le d_{t}(u)$. The only possibility then is $d_{s\ell}(u)=d_{t\ell}(u)=1$. In this case, we must have $d_s(u)=d_{\ell}(u)+1$, $d_t(u)=d_{\ell}(u)-1$, and $d_s'(u)=d_{\ell}'(u)=d_t'(u)$, and hence
\begin{eqnarray*}
&& d_{s\ell}'(u) + |d_{s\ell}'(u)-1| + d_{t\ell}'(u) + |d_{t\ell}'(u)-1|   \\
& = & 0 + (|d_{s\ell}(u)-1|+1)+ 0 + (|d_{t\ell}(u)-1|+1) \\
& = & d_{s\ell}(u) + |d_{s\ell}(u)-1| + d_{t\ell}(u) + |d_{t\ell}(u)-1|.
\end{eqnarray*}
Hence Inequality~(\ref{ineq3}) holds for $v=u$. Similarly, reversing the roles of colours $s$ and $t$ if necessary, we can show that it holds for $v=z$.

We have thus shown that (\ref{ineq1}) holds for all $v \in V$, with strict inequality when $v=u$. We conclude that $f'$ is a $k$-edge-colouring of $\cal G$ with $\delta(\cal G,f')< \delta(\cal G,f)$, a contradiction. Hence $\cal G$ must possess an equitable $k$-edge-colouring.
\end{proof}

\section{Proof of the main result}

Throughout the rest of this paper, unless otherwise specified, $\lambda$, $\mu$, $m$, $n$, $k$, and $c_1,c_2, \ldots, c_k$ will denote positive integers,

We first give a simple lemma that implies the necessary condition (C4) for existence of a $(c_1,\dots,c_k)$-cycle decomposition of $\lambda K_{n \times m}$.

\begin{lemma}\label{lem:B5} {\rm \cite{Bry}}
Let ${\cal G}$ be a multigraph in which each edge has even multiplicity, and assume that ${\cal G}$ admits a $(c_1,\dots,c_k)$-cycle decomposition. Then $$\max\{ c_i: i=1,\ldots,k \} \le \frac{1}{2}|E({\cal G})|-k+2.$$
\end{lemma}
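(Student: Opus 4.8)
The plan is to single out the longest cycle and extract the required gap of $2(k-1)$ from a mod-$2$ (cycle-space) analysis, which is the only place where the even-multiplicity hypothesis is genuinely used. Write the given decomposition as $\mathcal{G}=C_1\cup\dots\cup C_k$ with $c_i=|E(C_i)|$ (since every edge has even multiplicity, all degrees are even, so there is no $1$-factor), and assume without loss of generality that $c_k=\max_i c_i$. If $c_k\le 2$ the claimed inequality is immediate, so suppose $c_k\ge 3$. After rearrangement, the statement to prove is equivalent to $c_k\le\sum_{i=1}^{k-1}c_i-2(k-1)+2$, and since every $2$-cycle among $C_1,\dots,C_{k-1}$ contributes $c_i-2=0$, it suffices to control the cycles of length $\ge 3$.

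The first step is a reduction to a symmetric-difference identity in the binary cycle space. Deleting $E(C_k)$ from $\mathcal{G}$ leaves $\mathcal{H}=C_1\cup\dots\cup C_{k-1}$; since every edge of $\mathcal{G}$ has even multiplicity while $C_k$ (having length $\ge 3$) uses each of its underlying pairs exactly once, the pairs of odd multiplicity in $\mathcal{H}$ are precisely the edges of the underlying simple cycle $\bar C_k$. Reading $\mathcal{H}$ mod $2$ through its decomposition, and noting that each $2$-cycle contributes nothing, yields $\bar C_k=\bigtriangleup_{j\in S}\bar C_j$ (symmetric difference over $\mathbb{F}_2$), where $S=\{\,j\le k-1:c_j\ge 3\,\}$ and $\bar C_j$ is the underlying $c_j$-cycle of $C_j$.

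Next I would pass to a minimal $T\subseteq S$ with $\bigtriangleup_{j\in T}\bar C_j=\bar C_k$; by the standard minimality argument the family $\{\bar C_j:j\in T\}$ is linearly independent in the binary cycle space, and independence also forces the union $\hat G$ of these cycles (viewed as a simple graph, with the obvious vertex set) to be connected, with $|V(\hat G)|\ge|V(\bar C_k)|=c_k$. The circuit rank $|E(\hat G)|-|V(\hat G)|+1$ of the connected graph $\hat G$ is at least $|T|$ because the $\bar C_j$ are independent cycles, so $\hat G$ has at least $c_k+|T|-1$ edges, i.e.\ at least $|T|-1$ edges outside $\bar C_k$. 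Counting incidences $\sum_{j\in T}c_j=\sum_{e\in E(\hat G)}|\{j\in T:e\in E(\bar C_j)\}|$ and using that each edge of $\bar C_k$ is covered an odd (hence $\ge 1$) number of times while each of the $\ge|T|-1$ remaining edges of $\hat G$ is covered an even (hence $\ge 2$) number of times gives $\sum_{j\in T}c_j\ge c_k+2(|T|-1)$. Adding back the remaining cycles $C_j$, $j\in\{1,\dots,k-1\}\setminus T$ (each of length $\ge 2$), yields $\sum_{j=1}^{k-1}c_j\ge c_k+2(k-1)-2$; combining this with $|E(\mathcal{G})|=c_k+\sum_{j=1}^{k-1}c_j$ gives exactly $c_k\le\tfrac12|E(\mathcal{G})|-k+2$.

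I expect the heart of the argument to be precisely the passage from the symmetric-difference identity to the edge count: the naive bound $c_i\ge 2$ is far too weak, and the gain of $2(k-1)$ must be extracted from the "internal" edges that the overlapping cycles $\bar C_j$ are forced to create, which is what the circuit-rank/connectivity estimate supplies. The remaining points that need care are the reduction to a linearly independent subfamily, the connectivity of $\hat G$, and the parity bookkeeping that isolates the role of the $2$-cycles.
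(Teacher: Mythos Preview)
Your proof is correct, and it takes a genuinely different route from the paper's argument.

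The paper proceeds by minimal counterexample: assuming the inequality fails, it chooses a parallel copy $e'$ of each edge $e$ of the longest cycle $C_k$, argues by pigeonhole that some other cycle $C$ has at most one edge outside the doubled set $E^*$, and then shortens both $C_k$ and $C$ (deleting a matched pair of parallel paths) to produce a strictly smaller counterexample with the same violation of the inequality. Your argument instead works directly in the binary cycle space: the even-multiplicity hypothesis becomes the identity $\bar C_k=\bigtriangleup_{j\in S}\bar C_j$, a minimal representing subfamily $T$ is linearly independent and (because $\bar C_k$ is connected) spans a connected simple graph $\hat G$, and the circuit-rank bound $|E(\hat G)|\ge c_k+|T|-1$ together with the parity-weighted incidence count extracts exactly the surplus $2(|T|-1)$; the remaining $k-1-|T|$ cycles contribute the rest. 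The paper's approach is entirely elementary and needs no cycle-space language, while yours is a clean direct proof (no induction, no contradiction) that makes the role of the even-multiplicity hypothesis completely transparent as an $\mathbb{F}_2$ identity. Both are short; yours is arguably more conceptual, the paper's more hands-on.
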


\begin{proof}
Suppose that the result does not hold, and let ${\cal G}$ be a smallest counterexample. That is, ${\cal G}$ is a multigraph with the smallest number of edges such that every edge of ${\cal G}$ has even multiplicity, and ${\cal G}$ admits a $(c_1,\dots,c_k)$-cycle decomposition ${\cal C}$ with $\max\{ c_i: i=1,\ldots,k \} > \frac{1}{2}\eps -k+2$. Let $E=E(\cal G)$ and $\eps=|E|$, and let $C_k$ be a cycle in ${\cal C}$ of maximum length $c_k$. For each $e \in E(C_k)$, choose an edge $e'$ parallel to $e$, $e' \ne e$, and let $E^*=E(C_k) \cup \{ e': e \in E(C_k)\}$. Since $c_k > \frac{1}{2}\eps -k+2$, we have $|E-E^*|=\eps-2c_k<2(k-2)<2(k-1)$. Hence there exists a cycle $C$ in  ${\cal C}$, $C \ne C_k$, that contains at most one edge of $E-E^*$. If $C$ contains no edges of $E-E^*$, then $C$ is a $c_k$-cycle parallel to $C_k$, $E(C)=E^*-E(C_k)$, and each of the remaining $k-2$ cycles of ${\cal C}$ contains at least 2 edges of $E-E^*$. It follows that $\eps \ge 2c_k + 2(k-2)$, contradicting $c_k > \frac{1}{2}\eps -k+2$.

Hence $C$ must contain exactly one edge of $E-E^*$ --- call it $e$ --- and the edges in $E(C) \cap E^*$ form a path $P$ of length $t$. Let $P_k$ be the path in $C_k$ parallel to $P$. Obtain a graph ${\cal G'}$ from  ${\cal G}$ by deleting the edges of $P$ and $P_k$, and a cycle decomposition ${\cal C'}$ of ${\cal G'}$ by deleting $C$ from ${\cal C}$ and replacing $C_k$ with the cycle $(C_k-P_k)+e$.  Observe that ${\cal G'}$ has $\eps'=\eps-2t$ edges, and each edge has even multiplicity. Moreover, ${\cal C'}$ is indeed a cycle decomposition of ${\cal G'}$; it contains $k'=k-1$ cycles, and maximum cycle length is $c_k' \ge c_k-t+1$. Hence, by assumption,
$$c_k' \ge c_k-t+1 > \left(\frac{1}{2}\eps -k+2 \right)-t+1 = \frac{1}{2}\eps' -k'+2.$$
Thus ${\cal G'}$ is a smaller counterexample, contradicting the minimality of ${\cal G}$.

We conclude that the statement of the lemma holds.
\end{proof}

Our main Theorem \ref{mainthmultialspach} will follow easily from the seemingly stronger Theorem~\ref{maininductiveseq} and Corollary~\ref{cor:maininductiveseq} below.

\begin{theo}\label{maininductiveseq}
Let $\lambda m(n-1)$ be even, and assume there exists a $(c_1,\dots,c_k)$-cycle decomposition of $\lambda m K_n$.
Then, for all $\ell=n,n+1, \ldots,mn$ there exist a graph $\mathcal G=(V,E)$ of order $\ell$ and a function $g:V \rightarrow {\mathbb Z}^+$ with the following properties:
\begin{description}
\item[(P1)] $\mathcal G$ is $n$-partite;
\item[(P2)] $\sum_{v\in W}g(v)=m$ for each part $W$ of $\mathcal G$;
\item[(P3)] $m_{\mathcal G}(u,v)=\lambda g(u)g(v)$ for each pair of vertices $u,v$ from distinct parts of $\mathcal G$;
\item[(P4)] $\mathcal G$ admits a $k$-edge-colouring such that, for each each colour $i \in \{ 1,2,\ldots,k \}$:
\begin{description}
\item[(P4a)] colour class $\cal G(i)$ has $c_im$ edges;
\item[(P4b)] $d_{{\mathcal G}(i)}(v)\in\{0,2g(v)\}$ for each $v\in V$; and
\item[(P4c)] $\cal G(i)$ has a unique non-trivial connected component.
\end{description}
\end{description}
\end{theo}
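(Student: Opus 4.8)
The plan is to prove Theorem~\ref{maininductiveseq} by induction on $\ell$, running from $\ell=n$ up to $\ell=mn$: from a graph $\mathcal G$ of order $\ell$ carrying a function $g$ with properties (P1)--(P4) I produce a graph of order $\ell+1$ with the analogous properties by detaching a single vertex into two. Note that when $\ell$ reaches $mn$, property (P2) forces $g\equiv1$, so (P1) and (P3) give $\mathcal G=\lambda K_{n\times m}$, while (P4b)--(P4c) say each colour class is a single cycle, of length $c_im$ by (P4a); so Theorem~\ref{maininductiveseq} (with Corollary~\ref{cor:maininductiveseq} for the odd-degree case) yields Theorem~\ref{mainthmultialspach}.

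For the base case $\ell=n$, properties (P1)--(P3) force $\mathcal G=\lambda m^2K_n$ with $g\equiv m$. To colour it, take the hypothesised $(c_1,\dots,c_k)$-cycle decomposition of $\lambda mK_n$ --- which consists of cycles $C_1,\dots,C_k$ only, since $\lambda m(n-1)$ is even --- and let $\mathcal G(i)$ be obtained from $C_i$ by replacing every edge with $m$ parallel edges. Since the $C_i$ partition $E(\lambda mK_n)$, the classes $\mathcal G(i)$ partition $E(m\cdot\lambda mK_n)=E(\lambda m^2K_n)$, and one checks $|E(\mathcal G(i))|=mc_i$ (P4a), $d_{\mathcal G(i)}(v)=m\,d_{C_i}(v)\in\{0,2m\}=\{0,2g(v)\}$ (P4b), and that the unique non-trivial component of $\mathcal G(i)$ is the (connected) $m$-fold blow-up of $C_i$ (P4c).

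For the inductive step, suppose $n\le\ell<mn$. Since $\sum_{v\in V}g(v)=mn>\ell=|V|$ and $g\ge1$, some vertex $\alpha$, lying in a part $W_0$, has $g(\alpha)\ge2$. I split $\alpha$ into vertices $\alpha,\beta\in W_0$, put $g(\alpha):=g(\alpha)-1$, $g(\beta):=1$, and, for each $v\notin W_0$, re-assign exactly $\lambda g(v)$ of the $\lambda g(\alpha)g(v)$ edges between $\alpha$ and $v$ so that they join $\beta$ and $v$ instead. Properties (P1), (P2), (P3) and (P4a) hold for any such re-assignment, and (P4b) holds as long as $\beta$ receives exactly two edges of each colour active at $\alpha$ and none of the others (then $\alpha$ keeps $2g(\alpha)-2$ of each such colour). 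To arrange this, form the bipartite multigraph $B$ whose parts are indexed by the neighbours $v$ of $\alpha$ and by the colours $i$ active at $\alpha$, with $m_{\mathcal G(i)}(\alpha,v)$ parallel edges between $v$ and $i$; then $\deg_B(v)=\lambda g(\alpha)g(v)$ and $\deg_B(i)=2g(\alpha)$, both divisible by $g(\alpha)$. By de Werra's Theorem~\ref{BEElemma}, $B$ has an equitable $g(\alpha)$-edge-colouring, and by the divisibility each of its classes meets every $v$ in exactly $\lambda g(v)$ edges and every $i$ in exactly $2$; re-assigning the edges of $\mathcal G$ corresponding to one such class realises the required degrees.

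The remaining and hardest point is (P4c). Write $\mathcal K_i$ for the unique non-trivial component of $\mathcal G(i)$. If $\alpha\notin V(\mathcal K_i)$ then $\mathcal G(i)$ is unchanged; otherwise $d_{\mathcal K_i}(\alpha)=2g(\alpha)\ge4$, and since every degree of $\mathcal K_i$ is even by (P4b), a parity count on the edge cut shows that each component $\mathcal H$ of $\mathcal K_i-\alpha$ is joined to $\alpha$ by an even number $m_{\mathcal K_i}(\alpha,V(\mathcal H))\ge2$ of edges. By Lemma~\ref{simpleconlemma}, the detachment of $\mathcal K_i$ is connected provided the two colour-$i$ edges moved to $\beta$ are not both drawn from a single component $\mathcal H$ with $m_{\mathcal K_i}(\alpha,V(\mathcal H))=2$; in particular no difficulty arises when $\mathcal K_i-\alpha$ is connected, which is automatic in the very first detachment, where every $\mathcal K_i$ is a blow-up of a cycle. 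So what must be shown is that the equitable colouring of $B$ --- equivalently, the re-assignment --- can be chosen to avoid these forbidden monochromatic pairs for all $i$ simultaneously, and I expect this to be the core of the proof. I would attack it by choosing, among all re-assignments meeting the degree conditions above, one containing as few forbidden pairs as possible, and then using colour-and-endpoint exchanges --- with $m_{\mathcal K_i}(\alpha,V(\mathcal H))\ge2$ guaranteeing room to exchange --- to show this minimum is $0$; an alternative is to replace the bare use of de Werra by a detachment theorem that keeps each colour class connected. Granting (P4c), the induction closes and the theorem follows.
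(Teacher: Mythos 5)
Your base case, the construction of the auxiliary bipartite graph $B$, the use of de Werra's theorem to secure the degree conditions (P1)--(P4b), and your diagnosis of exactly when Lemma~\ref{simpleconlemma} fails (namely, when both colour-$i$ edges moved to $\beta$ come from a single component $\mathcal H$ of $\mathcal K_i-\alpha$ with $m_{\mathcal K_i}(\alpha,V(\mathcal H))=2$) all match the paper and are correct. But the proof of (P4c) is a genuine gap, not a detail: you explicitly write ``I expect this to be the core of the proof'' and ``Granting (P4c), the induction closes,'' and the strategy you sketch --- minimise the number of forbidden monochromatic pairs over all admissible re-assignments and remove them by colour-and-endpoint exchanges --- is not carried out and is not obviously workable. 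Any exchange must simultaneously preserve $d_{B_1}(x_i)\in\{0,2\}$ for \emph{every} colour and $d_{B_1}(u)=\lambda g(u)$ for \emph{every} neighbour $u$, so the permissible moves are constrained to alternating closed trails in $B$, and you would need to show that such a trail can always be found that destroys a forbidden pair at one $x_i$ without creating new ones at other colours. That is precisely the kind of simultaneous bookkeeping the theorem is designed to avoid, and asserting it without proof leaves the main difficulty unresolved.

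The paper closes this gap with a different, two-stage device that you may want to compare against your plan. After the equitable $g(\alpha)$-edge-colouring of $B$, it forms $B_2$, the union of \emph{two} colour classes, so that $d_{B_2}(x_i)\in\{0,4\}$ and $d_{B_2}(u)=2\lambda g(u)$. For each colour $i$ for which some component $\mathcal H_i$ of $\mathcal G(i)-\alpha$ satisfies $m_{B_2}(x_i,V(\mathcal H_i))=2$ (the only dangerous configuration), it splits $x_i$ into two vertices so as to isolate those two edges, applies de Werra again to get an equitable $2$-edge-colouring of the modified graph, takes one colour class, and merges the split vertices back to obtain $B_1$. This forces $m_{B_1}(x_i,V(\mathcal H_i))=1$ for every dangerous pair while automatically retaining $d_{B_1}(x_i)\in\{0,2\}$ and $d_{B_1}(u)=\lambda g(u)$; a short three-case check (a component receiving $3$ or $4$, exactly $2$, or four components receiving $1$ each of the four $B_2$-edges at $x_i$) then verifies the hypothesis of Lemma~\ref{simpleconlemma} for every colour simultaneously. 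If you wish to salvage your own route, you would need either to prove the exchange argument in full or to invoke an existing detachment theorem that preserves connectivity of colour classes; as written, the proposal establishes everything except the property that actually makes the colour classes into single cycles at the end of the induction.
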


\begin{proof} We prove the theorem by induction on $\ell$.

First we prove the basis of induction, case $\ell=n$. Let $\mathcal G=(V,E)=\lambda m^2K_n$ and $g(v)=m$ for all $v\in V$. Then the graph $\mathcal G$ is of order $\ell$, and Properties (P1)--(P3) clearly hold for $\mathcal G$ and the function $g$. By assumption, there exists a decomposition of $\lambda m K_n$ into cycles of lengths $c_1,\ldots, c_k$. Replacing each edge in this decomposition by $m$ parallel edges we obtain a decomposition of $\mathcal G$ into $m$-fold cycles of lengths $c_1,\ldots, c_k$. Now define a $k$-edge-colouring of $\mathcal G$ by taking the colour class $\cal G(i)$ to be the $m$-fold cycle of length $c_i$ in this decomposition, together with the remaining $n-c_i$ isolated vertices. Clearly, Property (P4) then holds for $\mathcal G$ and $g$ as well.

Suppose now that for some $\ell \in \{ n, n+1,  \ldots, mn-1 \}$ there exist a graph $\mathcal G=(V,E)$ of order $\ell$ and a function $g:V \rightarrow {\mathbb Z}^+$ satisfying properties (P1)--(P4) from the statement of the theorem. We shall now construct a graph $\mathcal G'$ of order $\ell+1$ and a function $g':V(\mathcal G') \rightarrow {\mathbb Z}^+$ satisfying Properties (P1)--(P4). Since $\ell<mn$ and (P1)--(P2) hold for $\cal G$, there exists a vertex $\alpha$ of $\mathcal G$ with $g(\alpha)>1$. The graph $\mathcal G'$ will be constructed as an $\alpha$-detachment of $\mathcal G$ with the help of an auxiliary bipartite graph $B$ defined as follows.

First, define sets $K=\{ 1,2,\ldots,k \}$, $X=\{x_1,\dots,x_k\}$, and $V_{\alpha}=V - \{ \alpha \}$, and let $B$ be the bipartite graph with bipartition $\{X, V_{\alpha} \}$ and with $m_B(x_i,u)=m_{\mathcal G(i)}(\alpha,u)$ for each $u\in V_{\alpha}$ and $i \in K$. Observe that, by the induction hypothesis,  $d_{B}(x_i)=d_{\mathcal G(i)}(\alpha)\in\{0,2g(\alpha)\}$  and $d_B(u)=m_{\mathcal G}(\alpha,u)=\lambda g(\alpha)g(u)$ for all $i \in K$ and $u\in V_{\alpha}$ such that $u$ and $\alpha$ are from distinct parts of $\mathcal G$.

By Theorem~\ref{BEElemma}, there exists an equitable $g(\alpha)$-edge-colouring of $B$. With respect to such a colouring  we have  $d_{B(j)}(x_i)=d_B(x_i)/g(\alpha)=d_{\mathcal G(i)}(\alpha)/g(\alpha)\in\{0,2\}$  and $d_{B(j)}(u)=d_B(u)/g(\alpha)=\lambda g(u)$ for all $i \in K$, $j \in \{ 1,2,\ldots,g(\alpha) \}$, and $u\in V_{\alpha}$, where $u$ and $\alpha$ are from distinct parts of $\mathcal G$. In particular, observe that $d_{B(j)}(x_i)$ is constant  with respect to parameter $j$ (namely, it is 0 if $d_B(x_i)=0$, and 2 if $d_B(x_i)=2g(\alpha)$). We shall use one colour class of this equitable $g(\alpha)$-edge-colouring of $B$ to define the $\alpha$-detachment ${\cal G'}$ of ${\cal G}$, however, to guarantee Property (P4c), we may need to first modify the colouring as follows.

Let $B_2$ be a spanning subgraph of $B$ that is the union of two arbitrary colour classes of $B$ with respect to our equitable $g(\alpha)$-edge-colouring.  Then $d_{B_2}(x_i)\in\{0,4\}$ and $d_{B_2}(u)=2\lambda g(u)$ for all $i \in K$ and $u\in V_{\alpha}$ such that $u$ and $\alpha$ are from distinct parts of $\mathcal G$.

Let $K'$ be the subset of $K$ containing all colours $i$ such that
\begin{equation} \label{conntrick}
\mbox{ there exists a connected component } {\mathcal H_i} \mbox{ of } \mathcal G(i)-\alpha \mbox{ with }  m_{B_2}(x_i,V({\mathcal H_i}))=2.
\end{equation}
We form a new (bipartite) graph $B_2'$ from $B_2$ by splitting each vertex $x_i$, for $i \in K'$, into vertices $x_i$ and $y_i$, and then divide the edges incident with $x_i$  so that $m_{B_2'}(x_i, V({\mathcal H_i}))=2$. Theorem~\ref{BEElemma} gives existence of an equitable $2$-edge-colouring of $B_2'$. Take an arbitrary colour class in this colouring of $B_2'$, and obtain a new graph $B_1$ from this colour class by identifying vertices $x_i$ and $y_i$ for each $i \in K'$; call the new vertex $x_i$. Observe that $m_{B_1}(x_i,V({\mathcal H_i}))=1$ for all $i \in K'$, while  $d_{B_1}(x_i)\in\{0,2\}$ for all $i \in K$ and $d_{B_1}(u)=\lambda g(u)$ for all $u\in V_{\alpha}$ such that $u$ and $\alpha$ are from distinct parts of $\mathcal G$.

We are now ready to define the new graph $\mathcal G'$. Informally speaking,  $\mathcal G'$ is obtained from $\mathcal G$  by splitting the vertex $\alpha$ into vertices $\alpha$ and $\beta$, and converting all edges of the form $\{ \alpha, u \}$ that correspond to edges of $B_1$ to edges of the  form $\{ \beta, u \}$, preserving the colour of each edge. More formally, take any $\beta \not\in V$, and define $\mathcal G'$ as a $k$-edge-coloured graph  with $V(\mathcal G')=V\cup\{\beta\}$ and, for all $i \in K$ and $u,v \in V_{\alpha}$,
\begin{eqnarray*}
m_{{\cal G}'(i)}(u,v) &=& m_{{\cal G}(i)}(u,v), \\
m_{{\cal G}'(i)}(\beta,u) &=& m_{B_1}(x_i,u), \\
m_{{\cal G}'(i)}(\alpha,u) &=& m_{{\cal G}(i)}(\alpha,u) - m_{B_1}(x_i,u),  \mbox{ and} \\
m_{{\cal G}'(i)}(\alpha,\beta) &=& 0.
\end{eqnarray*}
Clearly, $\mathcal G'$ is of order $\ell+1$ and is $n$-partite (with $\alpha$ and $\beta$ in the same part). Moreover, $|E(\mathcal G'(i))|=|E(\mathcal G(i))|=c_im$ for all $i \in K$, so Properties (P1) and (P4a) hold for $\mathcal G'$.

We define the function $g':V(\mathcal G') \rightarrow {\mathbb Z}^+$ as follows: $g'(\alpha)=g(\alpha)-1, g'(\beta)=1$, and  $g'(v)=g(v)$ for all $v \in V_{\alpha}$. We then immediately obtain $\sum_{v\in W}g'(v)=m$ for each part $W$ of $\mathcal G'$, so Property (P2) holds for $\mathcal G'$ and $g'$ as well.

To verify Property (P4b), take any $i \in K$. Observe that $d_{{\mathcal G}'(i)}(v)=d_{{\mathcal G}(i)}(v) \in \{0,2g'(v)\}$ for each $v\in V_{\alpha}$. Furthermore, since $d_{B_1}(x_i)\in \{ 0,2 \}$, we have
$d_{{\cal G}'(i)}(\beta)\in\{0,2\}=\{0,2g'(\beta)\}$  and
$d_{{\cal G}'(i)}(\alpha)\in\{0,2g(\alpha)-2\}=\{0,2g'(\alpha)\}$.

To verify Property (P3), first observe that for any $u,v \in V_{\alpha}$ that belong to distinct parts of ${\cal G}'$, and hence to distinct parts of ${\cal G}$, we have
$m_{{\cal G}'}(u,v)=m_{{\cal G}}(u,v)=\lambda g(u)g(v)=\lambda g'(u)g'(v)$. Furthermore, for any $u \in V_{\alpha}$ not in the same part as $\alpha$ and $\beta$, we have
$m_{{\cal G}'}(\beta,u) = d_{B_1}(u)=\lambda g(u)=\lambda g'(u)g'(\beta)$, and
$m_{{\cal G}'}(\alpha,u) = m_{{\cal G}}(\alpha,u)-d_{B_1}(u)=\lambda g(\alpha)g(u)-\lambda g(u)=\lambda (g(\alpha)-1)g(u)=\lambda g'(\alpha)g'(u)$.

It remains to verify Property (P4c), namely, that every colour class ${\mathcal G}'(i)$ has a unique non-trivial connected component. Fix a colour $i \in K$. If $\alpha$ is an isolated vertex in ${\mathcal G}(i)$, then ${\mathcal G}'(i)$ was obtained from ${\mathcal G}(i)$ by adjoining a new isolated vertex $\beta$; hence ${\mathcal G}'(i)$ has a unique non-trivial connected component since ${\mathcal G}(i)$ does.

Hence assume $\alpha$ is a vertex in ${\cal C}(i)$, the unique non-trivial connected component of $\mathcal G(i)$. Let ${\cal C}'(i)$ be the subgraph of ${\mathcal G}'(i)$ induced by $V({\cal C}(i)) \cup \{ \beta \}$. It suffices to show that ${\cal C}'(i)$ is connected; since ${\mathcal G}'(i)$ inherited all isolated vertices of ${\mathcal G}(i)$, it will then follow that ${\cal C}'(i)$ is the unique non-trivial connected component of ${\mathcal G}'(i)$.

First observe that, since $\mathcal C(i)$ is a connected even graph, it has no cut edges, which implies that for each connected component $\mathcal H_i$ of $\mathcal C(i) - \alpha$ we have $m_{{\mathcal C}(i)}(\alpha,V(\mathcal H_i))\ge 2$, and hence also $m_{B}(x_i,V(\cal H_i)) \ge 2$. This fact will be used in Cases 1--3 below.

By Lemma~\ref{simpleconlemma}, it suffices to show that for some connected component $\mathcal H_i$ of $\mathcal C(i)-\alpha$ we have $1\le m_{{\cal C}'(i)}(\beta, V(\cal H_i)) < m_{{\cal C}(i)}(\alpha, V(\cal H_i))$,
or equivalently, that $1\le m_{B_1}(x_i,V(\cal H_i)) < m_B(x_i,V(\cal H_i))$.
Since $d_{B_2}(x_i)=4$, there are three cases to consider.

{\it Case 1: $m_{B_2}(x_i,V(\cal H_i))\in\{3,4\}$ for some connected component  $\cal H_i$ of $\mathcal C(i)-\alpha$.} Then
$$1\leq m_{B_1}(x_i,V(\cal H_i))\leq 2<m_{B_2}(x_i,V(\cal H_i))\leq m_{B}(x_i,V(\cal H_i)).$$

{\it Case 2: $m_{B_2}(x_i,V(\cal H_i))=2$ for some connected component  $\cal H_i$ of $\mathcal C(i)-\alpha$.} Since connected components of $\mathcal C(i)-\alpha$ are precisely the connected components of $\mathcal G(i)-\alpha$, we have $i\in K'$. We may assume that the graph $B_1$ was constructed using this particular connected component $\cal H_i$, so that $m_{B_1}(x_i,V(\cal H_i))=1$, which implies that $1=m_{B_1}(x_i,V(\cal H_i))<2\leq m_{B}(x_i,V(\cal H_i))$.

{\it Case 3: $m_{B_2}(x_i,V(\cal H_i)) = 1$ for four connected components  $\cal H_i$ of $\mathcal C(i)-\alpha$.} Now two of these four components have the property that $m_{B_1}(x_i,V(\cal H_i))=1 < 2 \le m_{B}(x_i,V(\cal H_i))$.

Since the conditions of Lemma~\ref{simpleconlemma} are satisfied in all cases, we conclude that ${\cal C}'(i)$ is connected, proving Property (P4c) for ${\mathcal G}'$.

We have thus shown that ${\mathcal G}'$ and $g'$ satisfy Properties (P1)--(P4). The result follows by induction.
\end{proof}

In the next corollary, we shall extend Theorem~\ref{maininductiveseq} to multigraphs of odd degree.

\begin{cor}\label{cor:maininductiveseq}
Let $\lambda m(n-1)$ be odd, and assume there exists a $(c_1,\dots,c_k)$-cycle decomposition of $\lambda m K_n$. Then, for all $\ell=n,n+1, \ldots,mn$ there exist a graph $\mathcal G=(V,E)$ of order $\ell$ and a function $g:V \rightarrow {\mathbb Z}^+$ with the following properties:
\begin{description}
\item[(P1)] $\mathcal G$ is $n$-partite;
\item[(P2)] $\sum_{v\in W}g(v)=m$ for each part $W$ of $\mathcal G$;
\item[(P3)] $m_{\mathcal G}(u,v)=\lambda g(u)g(v)$ for each pair of vertices $u,v$ from distinct parts of $\mathcal G$;
\item[(P4)] $\mathcal G$ admits a $(k+1)$-edge-colouring such that for each each colour $i \in \{ 1, \ldots, k \}$:
\begin{description}
\item[(P4a)] colour class $\cal G(i)$ has $c_im$ edges;
\item[(P4b)] $d_{{\mathcal G}(i)}(v)\in\{0,2g(v)\}$ for each $v\in V$; and
\item[(P4c)] $\cal G(i)$ has a unique non-trivial connected component.

\hspace{-10mm} In addition,
\item[(P4d)] colour class $\cal G(k+1)$ has $\frac{1}{2}mn$ edges; and
\item[(P4e)] $d_{{\mathcal G}(k+1)}(v)=g(v)$ for each $v\in V$.
\end{description}
\end{description}
\end{cor}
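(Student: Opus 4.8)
The plan is to reduce the odd-degree case to the even-degree case handled by Theorem~\ref{maininductiveseq}, by first splitting off a 1-factor and then applying the already-established result to what remains. More precisely, since $\lambda m(n-1)$ is odd, the multigraph $\lambda m K_n$ has all vertices of odd degree, and a $(c_1,\dots,c_k)$-cycle decomposition of $\lambda m K_n$ is by definition a decomposition into cycles of lengths $c_1,\dots,c_k$ together with a 1-factor $F$. The idea is to ``blow up'' $F$ appropriately and then run the detachment argument on the even-degree part only, carrying the 1-factor along passively as an extra colour class.

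First I would set up the base case $\ell=n$ exactly as in the proof of Theorem~\ref{maininductiveseq}: let $\mathcal G=\lambda m^2 K_n$ and $g\equiv m$. The decomposition of $\lambda m K_n$ into the cycles of lengths $c_i$ plus a 1-factor $F$ gives, after replacing each edge by $m$ parallel edges, a decomposition of $\mathcal G$ into $m$-fold cycles and the $m$-fold 1-factor $mF$. Colour the $m$-fold $c_i$-cycle with colour $i$ (for $i=1,\dots,k$) and colour $mF$ with colour $k+1$; the remaining vertices get no edge of the corresponding colour. Then (P1)--(P4c) hold as before, and for colour $k+1$ we have $d_{\mathcal G(k+1)}(v)=m\cdot 1=m=g(v)$, giving (P4e), while $|E(\mathcal G(k+1))|=m\cdot\frac{n}{2}=\frac{1}{2}mn$ gives (P4d). (Note $n$ must be even here since $\lambda m(n-1)$ odd forces $n-1$ odd.)

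For the induction step I would mirror the construction in Theorem~\ref{maininductiveseq} verbatim, but with $K=\{1,\dots,k\}$ enlarged to include the colour $k+1$ wherever the detachment bookkeeping is concerned. That is, when we pick a vertex $\alpha$ with $g(\alpha)>1$ and build the auxiliary bipartite graph $B$ with parts $X=\{x_1,\dots,x_k,x_{k+1}\}$ and $V_\alpha$, the new colour class $k+1$ contributes $d_B(x_{k+1})=d_{\mathcal G(k+1)}(\alpha)=g(\alpha)$ to the picture. The subtlety is that $g(\alpha)$ need not be even, so de Werra's equitable $g(\alpha)$-edge-colouring of $B$ will distribute the $k+1$ colour's edges at $\alpha$ unevenly among the $g(\alpha)$ colour classes. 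However, we do \emph{not} need (P4c) for colour $k+1$, and for (P4e) all we need is that after the detachment $\beta$ receives exactly $g'(\beta)=1$ edge of colour $k+1$ and $\alpha$ retains $g'(\alpha)=g(\alpha)-1$ of them. This is arranged by choosing the colour class of $B$ (and, after the doubling/re-colouring trick for connectivity, of $B_1$) so that $d_{B_1}(x_{k+1})=1$: equitability of a $g(\alpha)$-edge-colouring of the $2g(\alpha)$-ary... — more simply, one can handle colour $k+1$ separately from the equitable colouring machinery, directly splitting the $g(\alpha)$ edges of colour $k+1$ at $\alpha$ as one edge to $\beta$ and $g(\alpha)-1$ to $\alpha$, since there is no connectivity or parity constraint to respect. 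The main obstacle, and the point requiring care, is precisely this interaction: making sure the connectivity-fixing modification (passage from $B_2$ to $B_2'$ to $B_1$) for colours $1,\dots,k$ is compatible with also pinning down colour $k+1$'s contribution to $\beta$ at exactly $1$; but since colour $k+1$ imposes no component condition, one can simply exclude $x_{k+1}$ from the set $K'$ and from the doubling construction, and append its single edge to $\beta$ afterward. Everything else — (P1), (P2) (now using $g'(\alpha)+g'(\beta)=g(\alpha)$), (P3), (P4a), (P4b) for $i\le k$, (P4c) for $i\le k$, (P4d) ($|E(\mathcal G'(k+1))|$ is unchanged), and (P4e) ($d_{\mathcal G'(k+1)}(\beta)=1=g'(\beta)$, $d_{\mathcal G'(k+1)}(\alpha)=g(\alpha)-1=g'(\alpha)$, others unchanged) — follows exactly as in the proof of Theorem~\ref{maininductiveseq}. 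The result then follows by induction on $\ell$ from $n$ to $mn$.
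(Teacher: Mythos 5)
Your overall architecture is exactly the paper's: keep the base case as in Theorem~\ref{maininductiveseq} with the $m$-fold 1-factor as colour class $k+1$, and in the induction step enlarge the colour set to $K^*=K\cup\{k+1\}$ and the bipartite graph to have left vertices $x_1,\dots,x_{k+1}$, running the whole thing through the same equitable-colouring and detachment machinery. Where you go astray is the ``more simply'' detour. First, your stated motivation for it is mistaken: since $d_B(x_{k+1})=d_{\mathcal G(k+1)}(\alpha)=g(\alpha)$ and there are exactly $g(\alpha)$ colours, de Werra's equitable $g(\alpha)$-edge-colouring distributes these edges \emph{perfectly evenly}, one per colour class; hence $d_{B_2}(x_{k+1})=2$, and the equitable $2$-edge-colouring of $B_2'$ (in which $x_{k+1}$ is never split, as $k+1\notin K'$) forces $d_{B_1}(x_{k+1})=1$ with no choice to be made. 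There is no unevenness to repair.

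Second, the repair you propose --- removing colour $k+1$ from the equitable-colouring machinery, splitting its $g(\alpha)$ edges at $\alpha$ by hand as ``one to $\beta$, the rest to $\alpha$,'' and appending that edge afterward --- is not merely unnecessary; as described it would break Property (P3). The identity $m_{\mathcal G'}(\beta,u)=d_{B_1}(u)=\lambda g(u)=\lambda g'(\beta)g'(u)$ relies on $d_B(u)$ counting \emph{all} edges between $\alpha$ and $u$, including those of colour $k+1$, and on equitability dividing that total exactly by $g(\alpha)$. If the colour-$(k+1)$ edges are withheld from $B$, then $d_B(u)=\lambda g(\alpha)g(u)-m_{\mathcal G(k+1)}(\alpha,u)$ is in general not divisible by $g(\alpha)$, the colour classes of $B$ no longer have degree exactly $\lambda g(u)$ at $u$ (nor does $B_1$), and tacking on one colour-$(k+1)$ edge to an arbitrary neighbour of $\beta$ does not restore the count $m_{\mathcal G'}(\beta,u)=\lambda g(u)$ for every $u$. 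So the constraint you claim is absent (``no connectivity or parity constraint to respect'') is replaced by the multiplicity constraint (P3), which couples colour $k+1$ to the other colours. The fix is simply to delete the detour and keep $x_{k+1}$ inside $B$, $B_2$, $B_2'$, and $B_1$ throughout, exactly as in your initial setup and in the paper; everything you assert in your final sentence then goes through.
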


\begin{proof}
The proof of this corollary is very similar to the proof of Theorem~\ref{maininductiveseq}, hence we highlight only the differences.

In the base case $\ell=n$, the colour class ${\cal G}(k+1)$ is defined as the $m$-fold 1-factor in ${\cal G}=\lambda m^2$ arising from the 1-factor in the presumed decomposition of  $\lambda m K_n$. Properties (P1)--(P4) then clearly hold.

In the induction step, we define the auxiliary bipartite graph $B$ as follows. First, let $K^*=K \cup \{ k+1 \}$ and $X^*=X \cup \{ x_{k+1} \}$, and let $B$ be the bipartite graph with bipartition $\{X^*, V_{\alpha} \}$ and with $m_B(x_i,u)=m_{\mathcal G(i)}(\alpha,u)$ for each $u\in V_{\alpha}$ and $i \in K^*$. By the induction hypothesis,  $d_{B}(x_i)=d_{\mathcal G(i)}(\alpha)\in\{0,2g(\alpha)\}$ for all $i \in K$, $d_{B}(x_{k+1})=d_{\mathcal G(k+1)}(\alpha)=g(\alpha)$, and $d_B(u)=m_{\mathcal G}(\alpha,u)=\lambda g(\alpha)g(u)$ for all $u\in V_{\alpha}$.

As in the proof of Theorem~\ref{maininductiveseq}, we take an equitable $g(\alpha)$-edge-colouring of $B$, and let $B_2$ be the union of two of its colour classes. The subgraph $B_1$ is then defined exactly as before, adjusting only the edges incident with the vertices in $K' \subseteq K$. We thus end up with  $d_{B_1}(x_i)\in\{0,2\}$ for all $i \in K$, $d_{B_1}(x_{k+1})=1$, and $d_{B_1}(u)=\lambda g(u)$ for all $u\in V_{\alpha}$. In addition, we have $m_{B_1}(x_i,V({\mathcal H_i}))=1$ for all $i \in K'$.

The new graph $\mathcal G'$ and function $g':V(\mathcal G') \rightarrow {\mathbb Z}^+$ are now defined exactly as in the proof of Theorem~\ref{maininductiveseq}, and Properties (P1)--(P4c) are verified in the same way.

To see that Property (P4d) holds for $\mathcal G'$, observe that $|E(\mathcal G'(k+1))|=|E(\mathcal G(k+1))|=\frac{1}{2} mn$ by the induction hypothesis.

Finally, to verify Property (P4e), first observe that $d_{{\mathcal G}'(k+1)}(v)=d_{{\mathcal G}(k+1)}(v)=g'(v)$ for each $v\in V_{\alpha}$. Furthermore, since $d_{B_1}(x_{k+1})=1$, we have
$d_{{\cal G}'(k+1)}(\beta)=1=g'(\beta)$  and
$d_{{\cal G}'(k+1)}(\alpha)=g(\alpha)-1=g'(\alpha)$.
\end{proof}

We are now ready to prove our main Theorem~\ref{mainthmultialspach}.

\bigskip

\begin{proofof}~\ref{mainthmultialspach}.
Assume there exists a $(c_1,\dots,c_k)$-cycle decomposition of $\lambda m K_n$.

First, let $\lambda m (n-1)$ be even. By Theorem~\ref{maininductiveseq}, there exist a graph $\mathcal G=(V,E)$ of order $mn$ and a function $g:V \rightarrow {\mathbb Z}^+$ satisfying Properties (P1)--(P4). Thus $\mathcal G$ is $n$-partite, and since $\sum_{v\in V}g(v)=mn$ by Property (P2), and $g(v) \ge 1$ for all $v \in V$, we must have that $g(v) = 1$ for all $v \in V$. Therefore, again by Property (P2), each part of $\mathcal G$ has $m$ vertices. Furthermore, by Property (P3), $m_{\mathcal G}(u,v)= \lambda g(u)g(v)=\lambda$ for every pair of vertices in distinct parts of $\mathcal G$, implying that $\mathcal G$ is isomorphic to $\lambda K_{n\times m}$. By Properties (P4a) and (P4c), the graph $\mathcal G$ admits a $k$-edge-colouring such that each colour class ${\mathcal G}(i)$ has a unique non-trivial connected component ${\mathcal C}(i)$ with $c_im$ edges. Furthermore, property (P4b) tells us that each ${\mathcal C}(i)$ is 2-regular. Hence it is a cycle of length $c_im$. We thus have a $(c_1m,\dots,c_km)$-cycle decomposition of $\lambda K_{n\times m}$  as claimed.

When $\lambda m (n-1)$ is odd, Corollary~\ref{cor:maininductiveseq} similarly implies a decomposition of $\lambda K_{n\times m}$ into cycles of lengths $c_1 m,\ldots,c_k m$ plus a 1-factor. In particular, Properties (P4d)--(P4e) show that for ${\cal G}=\lambda K_{n\times m}$, since $g(v) = 1$ for all $v \in V$, the colour class ${\cal G}(k+1)$ is a 1-factor.
\end{proofof}

\section{More cycle decompositions of $\lambda K_{n \times m}$}

The proof of Theorem~\ref{maininductiveseq} was presented in the most general form, which we hope can be used in the future to derive other decomposition results. In the next theorem, however, we exploit the fact that at each step, each colour class is a detachment of an $m$-fold cycle (plus isolated vertices). This approach will allow us to obtain cycle decompositions of $\lambda K_{n \times m}$ with other cycle lengths. But first, we present the following simple observation, to be used in the proofs of Theorems~\ref{additional-1} and \ref{additional-2} below.

\begin{lemma}\label{lem:flower}
Let  $\cal G$ be a simple graph with a decomposition ${\cal C}=\{ C_1,\ldots,C_m \}$ into $c$-cycles. Assume there exists a vertex $\alpha$ of $\cal G$ such that each pair of distinct cycles in $\cal C$ intersect only in this vertex $\alpha$. Then:
\begin{enumerate}
\item There exists an $\alpha$-detachment $\cal H$ of $\cal G$ obtained by splitting vertex $\alpha$ into $m$ vertices so that $\cal H$ is isomorphic to a $cm$-cycle.
\item If $q_1,\ldots,q_r$ are positive integers such that $\sum_{i=1}^r q_i=m$, then there exists an $\alpha$-detachment $\cal H$ of $\cal G$ obtained by splitting vertex $\alpha$ into $m$ vertices so that $\cal H$ is a vertex-disjoint union of cycles of lengths $q_1c,\ldots,q_rc$.
\end{enumerate}
\end{lemma}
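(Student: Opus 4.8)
The plan is to prove both parts by a single explicit "chaining" construction; since Part~1 is exactly the case $r=1$, $q_1=m$ of Part~2, it suffices to carry out the construction for Part~2. The underlying picture is that $\cal G$ is a ``bouquet'' of $m$ internally disjoint $c$-cycles glued at $\alpha$, and detaching $\alpha$ amounts to choosing how to splice the cycles end-to-end.

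First I would record the local structure at $\alpha$. Since the cycles of $\cal C$ pairwise meet only in $\alpha$, deleting $\alpha$ from $C_j$ leaves a path $P_j$ on $c-1$ vertices, and $P_1,\ldots,P_m$ are pairwise vertex-disjoint and avoid $\alpha$; hence $V({\cal G})=\{\alpha\}\cup\bigcup_j V(P_j)$, and $\alpha$ has degree exactly $2m$, joined to the two endpoints $u_j,w_j$ of each $P_j$ by single edges $\{\alpha,u_j\}$ and $\{\alpha,w_j\}$. Next, given $q_1,\ldots,q_r$ with $\sum_{i=1}^r q_i=m$, fix a partition of $\{1,\ldots,m\}$ into blocks $B_1,\ldots,B_r$ with $|B_i|=q_i$, and within each block impose a cyclic order, writing $B_i=\{j_1,\ldots,j_{q_i}\}$ with the index $s$ read modulo $q_i$. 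Introduce $m$ new vertices $\alpha_1,\ldots,\alpha_m$ (one copy of $\alpha$ per index $j$), and define $\cal H$ to be the $\alpha$-detachment of $\cal G$ in which, for each block $B_i$ and each $s$, the edge $\{\alpha,u_{j_s}\}$ of $C_{j_s}$ is reassigned to $\alpha_{j_s}$ and the edge $\{\alpha,w_{j_s}\}$ of $C_{j_s}$ is reassigned to $\alpha_{j_{s+1}}$, while all edges lying in the paths $P_j$ are kept unchanged.

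It remains to check that this $\cal H$ has the claimed form. Each $\alpha_j$ receives exactly two edges at $\alpha$: the edge $\{\alpha_j,u_j\}$ from $C_j$ and the edge $\{\alpha_j,w_{j'}\}$ from the cycle $C_{j'}$ immediately preceding $C_j$ in its block; so every $\alpha_j$ is non-isolated of degree $2$, and the $2m$ edges at $\alpha$ are distributed two to a vertex, confirming that $\cal H$ is a genuine $\alpha$-detachment splitting $\alpha$ into $m$ vertices. Within block $B_i$, the reassigned $\alpha$-edges together with the paths $P_{j_1},\ldots,P_{j_{q_i}}$ form the closed walk that runs $\alpha_{j_1}$, through $P_{j_1}$ from $u_{j_1}$ to $w_{j_1}$, to $\alpha_{j_2}$, through $P_{j_2}$, $\ldots$, through $P_{j_{q_i}}$, back to $\alpha_{j_1}$; because the $P_{j_s}$ are internally disjoint and the copies $\alpha_{j_s}$ are distinct, this closed walk is a cycle, of length $\sum_s c=q_i c$. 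Components coming from different blocks use disjoint copies of $\alpha$ and disjoint $P_j$'s, so $\cal H$ is the vertex-disjoint union of cycles of lengths $q_1 c,\ldots,q_r c$, which proves Part~2; specializing to $r=1$, $q_1=m$ gives a single $cm$-cycle, which is Part~1.

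I do not anticipate a serious obstacle. The only points needing care are (a) that each $\alpha_j$ ends up with exactly two of the $2m$ edges at $\alpha$ — immediate from the cyclic reassignment inside each block — and (b) that each block-component is connected and $2$-regular, which relies squarely on the hypothesis that the cycles meet only in $\alpha$, i.e.\ on the internal disjointness of the $P_j$. The one degenerate case worth a sentence is $q_i=1$: there ``chaining a single cycle cyclically'' just reattaches both $\alpha$-edges of $C_{j_1}$ to the same copy $\alpha_{j_1}$, returning an ordinary $c$-cycle.
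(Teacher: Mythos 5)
Your construction is correct: the paper states this lemma without any proof (introducing it only as a ``simple observation''), and your explicit chaining of the paths $P_j = C_j - \alpha$ through distinct copies of $\alpha$, grouped into blocks of sizes $q_1,\ldots,q_r$ with a cyclic splice inside each block, is precisely the argument the authors leave to the reader. The two points that need care --- that each copy $\alpha_j$ receives exactly two of the $2m$ edges at $\alpha$, and that each block yields a single cycle because the $P_j$ are pairwise disjoint --- are both addressed, as is the degenerate case $q_i=1$.
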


\begin{theo}\label{additional-1}
Assume there exists a $(c_1, \dots,c_k)$-cycle decomposition of $\lambda m K_n$. Let $C_i$ be the cycle of length $c_i$ in this decomposition, and assume that the cycles $C_1,C_2,\ldots,C_k$ have been ordered so that for some integer $N$ with $1 \le N  < k$, for all $i \le N$, the cycle $C_i$ possesses a vertex not in $ \bigcup_{j=1}^{i-1} V(C_j)$.

For all $i \in \{1,\ldots,N\}$, let $r_i$ and $q_{i,1},\ldots,q_{i,r_i}$ be positive  integers such that $q_{i,1}+\ldots +q_{i,r_i}=m$, and if $\lambda=1$ and $c_i=2$, then each $q_{i,j} \ge 2$. Then the complete equipartite multigraph $\lambda K_{n \times m}$ can be decomposed into subgraphs $F_1,\ldots,F_k$, plus a 1-factor if $\lambda m (n-1)$ is odd, such that
\begin{description}
\item[(R1)] for all $i \in \{1,\ldots,k\}$, the subgraph $F_i$ is 2-regular and has $c_im$ edges, and
\item[(R2)] for all $i \in \{1,\ldots,N\}$, the subgraph $F_i$ is a vertex-disjoint union of cycles of lengths $q_{i,1}c_i,\ldots,q_{i,r_i}c_i$.
\end{description}
\end{theo}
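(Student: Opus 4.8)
The plan is to mimic the inductive construction in the proof of Theorem~\ref{maininductiveseq} (and its odd-degree analogue, Corollary~\ref{cor:maininductiveseq}), but to keep track of extra structure on the first $N$ colour classes so that, at the very end when $g \equiv 1$, each of the first $N$ colour classes has been split into a vertex-disjoint union of cycles of the prescribed lengths rather than a single $c_im$-cycle. Concretely, I would first observe that at the base case $\ell=n$, colour class $\cal G(i)$ is the $m$-fold cycle obtained from $C_i$ by replacing each edge by $m$ parallel edges, together with $n-c_i$ isolated vertices. The key point, which is where Lemma~\ref{lem:flower} comes in, is that an $m$-fold $c_i$-cycle is exactly a simple graph $\cal G$ admitting a decomposition into $m$ cycles of length $c_i$ all sharing a single common vertex $\alpha$ (take $\alpha$ to be any vertex of $C_i$; the $m$ parallel copies of $C_i$ become $m$ internally disjoint $c_i$-cycles through $\alpha$ after relabelling — more precisely, one applies the lemma after a suitable planar-style expansion). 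So Lemma~\ref{lem:flower}(2) tells us that in a \emph{single} detachment step at vertex $\alpha$ we could split $\cal G(i)$ into a vertex-disjoint union of cycles of lengths $q_{i,1}c_i,\ldots,q_{i,r_i}c_i$.

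The second ingredient is the hypothesis on the ordering $C_1,\ldots,C_N$: each $C_i$ with $i\le N$ has a vertex $v_i$ not lying on any earlier $C_j$. This is exactly what lets us perform the Lemma~\ref{lem:flower}(2) operation on colour $i$ at a vertex whose $g$-value is still large, \emph{before} that vertex gets split down to $g$-value $1$ by detachments forced by other colours. So the strategy is: run the detachment induction of Theorem~\ref{maininductiveseq} as before, but process the vertices in an order consistent with the ordering of the cycles; when we come to a vertex $\alpha$ that is $v_i$ for some $i\le N$ (equivalently, the vertex of $\cal G$ corresponding to a part-representative on which $C_i$ but no earlier $C_j$ lives, and which is incident only to colour $i$ among colours $\le i$... ), we use the equitable-colouring machinery on the auxiliary bipartite graph $B$ to route the edges so that colour class $\cal G(i)$ gets detached \emph{all at once} into the desired $q_{i,\bullet}c_i$-cycle union, while the other colour classes are detached in the usual connectivity-preserving way (Cases~1--3 with Lemma~\ref{simpleconlemma}). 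Once a colour $i\le N$ has been ``finalized'' into its prescribed cycle union, subsequent detachment steps at other vertices will not touch colour $i$ near those cycles (each of the final cycles is already $2$-regular and connected), so (R2) is preserved; and the remaining colours, including those with index $>N$, are handled exactly as in Theorem~\ref{maininductiveseq}, yielding (R1). The extra hypothesis ``if $\lambda=1$ and $c_i=2$ then each $q_{i,j}\ge 2$'' is precisely the condition in Lemma~\ref{lem:flower} needed for the resulting short cycles to be genuine cycles of length $\ge 3$ (a $q\cdot 2$-cycle with $q\ge 2$ is a $2q$-cycle, whereas $q=1$ would give a forbidden digon when $\lambda=1$); when $\lambda\ge 2$ a $2$-cycle of multiplicity $\ge 2$ is allowed, so no restriction is needed there. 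The odd-degree case is appended via the $(k+1)$st colour class exactly as in Corollary~\ref{cor:maininductiveseq}, contributing the $1$-factor.

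The main obstacle, and the part requiring care, is the bookkeeping that lets the Lemma~\ref{lem:flower} split of colour $i$ at vertex $v_i$ coexist with the connectivity-preserving (P4c) requirement for all the \emph{other} colours at that same detachment step. In Theorem~\ref{maininductiveseq} each detachment splits one vertex $\alpha$ into $\alpha$ and $\beta$ with $g'(\beta)=1$, and the equitable edge-colouring of $B$ (and then of $B_2'$, giving $B_1$) is chosen to guarantee each colour stays connected; here we must additionally insist that, for the one special colour $i$, the edges of $\cal G(i)$ at $\alpha$ are partitioned across the new vertices according to the $q_{i,\bullet}$ pattern. Because such a full split of colour $i$ cannot in general be achieved by peeling off one vertex at a time while staying connected, the cleanest fix is to modify the induction so that when we reach $v_i$ we detach it in \emph{one shot} into $r_i$ (or $m$) new vertices for the purposes of colour $i$ — i.e., apply Lemma~\ref{lem:flower}(2) directly to the $m$-fold cycle $\cal G(i)$ — and then, for every \emph{other} colour $j$, distribute the edges at $v_i$ among these same new vertices so that (P1)--(P4) continue to hold; the existence of such a distribution follows from a de Werra-type equitable colouring of the appropriate auxiliary bipartite graph, together with the same Case~1--3 / Lemma~\ref{simpleconlemma} analysis used before, applied to guarantee (P4c) for colours $j\ne i$. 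Verifying that these two demands are simultaneously satisfiable — the rigid $q_{i,\bullet}$-partition for colour $i$ and the flexible connectivity-preserving partition for all other colours — is the technical heart of the argument; everything else is a routine adaptation of the proofs of Theorem~\ref{maininductiveseq} and Corollary~\ref{cor:maininductiveseq}.
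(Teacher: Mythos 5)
Your proposal assembles the right ingredients (Lemma~\ref{lem:flower}(2) applied at the ``fresh'' vertex $v_i$ of $C_i$, with the ordering hypothesis guaranteeing such a vertex exists, and the observation about why $q_{i,j}\ge 2$ is needed when $\lambda=1$ and $c_i=2$), but it has a genuine gap, and the gap is one you yourself flag and then do not close. You insist on grafting the construction onto the induction of Theorem~\ref{maininductiveseq}, including the connectivity invariant (P4c) for all colour classes, and you correctly observe that reconciling the rigid $q_{i,\bullet}$-partition of colour $i$ at $v_i$ with connectivity-preservation for every other colour at the same detachment step is ``the technical heart of the argument'' --- and then you leave that heart unverified. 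As written this is not a proof. Moreover, the difficulty is self-inflicted: conclusion (R1) asks only that each $F_i$ be $2$-regular with $c_im$ edges, \emph{not} that it be a single cycle, so no connectivity condition on the colour classes with index outside $\{1,\dots,N\}$ (or on colour $i$ away from its prescribed cycles) is needed at all. The paper's proof exploits exactly this: it abandons the one-vertex-at-a-time detachment and instead, for $s=1,\dots,N$ in order, splits every not-yet-split vertex of $C_s$ \emph{except} $v_s$ into $m$ vertices at once, replacing the $\lambda m^2$ parallel edges between a split and an unsplit vertex by $\lambda m$ colour classes isomorphic to $K_{1,m}$ and the edges between two split vertices by a $1$-factorization of $\lambda K_{m,m}$; the invariants maintained are only the edge count per colour and degrees in $\{0,2\}$, which already force each colour class to be a disjoint union of cycles. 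Only after all other vertices of $C_s$ are split does colour class $s$ become the ``flower'' of $m$ copies of $C_s$ meeting solely at $v_s$, at which point Lemma~\ref{lem:flower}(2) applies, realized by choosing the $s$-coloured $1$-factor on the last edge $v_sw_s$ appropriately.

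A second, related error: you assert that the $m$-fold $c_i$-cycle at the base case ``is exactly a simple graph admitting a decomposition into $m$ cycles of length $c_i$ all sharing a single common vertex,'' to which Lemma~\ref{lem:flower} could be applied ``after a suitable planar-style expansion.'' This is not so --- the $m$-fold cycle is not simple, and Lemma~\ref{lem:flower} is stated for a simple graph whose $m$ constituent $c$-cycles pairwise meet \emph{only} at $\alpha$. That configuration does not exist until every vertex of $C_i$ other than $v_i$ has been fully detached into $m$ vertices, which is precisely why the construction must save $v_i$ for last and why the hypothesis that $C_i$ has a vertex outside $\bigcup_{j<i}V(C_j)$ is what makes the theorem work. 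Your proposal gestures at this ordering but never pins down the mechanism by which the other colours' edges at the split vertices are distributed (the explicit $K_{1,m}$-decompositions and $K_{m,m}$-$1$-factorizations), nor verifies that the resulting distribution is compatible with the rigid choice for colour $i$. If you drop the (P4c) requirement for the non-special colours and adopt the paper's explicit edge-replacement scheme, your outline becomes the paper's proof; as it stands, the central compatibility claim is asserted rather than proved.
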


\begin{proof}
We shall first prove the theorem for the case that $\lambda m (n-1)$, the degree of $\lambda m K_n$, is even. As in the statement of the theorem, let $C_1,C_2,\ldots,C_k$ be an ordering of the cycles in a decomposition of $\lambda m K_n$ with the specified properties. We may assume that $N$ is maximum in the sense that for all $i>N$, the vertex set of $C_i$ is contained in $\bigcup_{j=1}^{i-1} V(C_j)$.

First, we obtain a decomposition of $\lambda m^2 K_n$ into $m$-fold cycles $C_1',\ldots,C_k'$ of lengths $c_1, \dots,c_k$ by replacing each edge in each cycle $C_i$ with $m$ parallel edges. Note that $V(C_i)=V(C_i')$ for all $i=1,\ldots k$.
Let $\cal G_0$ be the graph $\lambda m^2 K_n$ with a $k$-edge colouring $f_0$ arising from this cycle decomposition; that is, the colour class $\cal G_0(i)$ consists of the $m$-fold cycle $C_i'$ together with the remaining $n-c_i$ isolated vertices.

For each $s=1,2,\ldots,N$, we shall now  construct a graph $\cal G_s$ and its $k$-edge colouring $f_s$ with the following properties (to be verified below):
\begin{description}
\item[(R0$'$)] $\cal G_s$ is obtained from $\cal G_{s-1}$ by splitting each vertex in $V(C_s)-\bigcup_{j=1}^{s-1} V(C_j)$ into $m$ vertices;
\item[(R1$'$)] for all $i \in \{1,\ldots,k\}$, the colour class $\cal G_s(i)$ has $c_im$ edges;
\item[(R1$''$)] for all colours $i \in \{1,\ldots,k\}$ and all vertices $x \in  \bigcup_{j=1}^{s} V(C_j)$, if $x^{(j)}$ denotes a vertex of $\cal G_s$ obtained by splitting $x$, then  $d_{\cal G_s(i)}(x^{(j)}) \in \{0,2\}$; and
\item[(R2$'$)] for all $i \in \{1,\ldots,s\}$, the colour class $\cal G_s(i)$  is a vertex-disjoint union of cycles of lengths $q_{i,1}c_i,\ldots,q_{i,r_i}c_i$, plus isolated vertices.
\end{description}
Fixing $s\in \{1,\ldots, N\}$, we construct $\cal G_{s}$ from $\cal G_{s-1}$  as follows.
\begin{enumerate}
\item Choose some $v_s \in V(C_s)-\bigcup_{j=1}^{s-1} V(C_j)$.
\item For each $x \in V(C_s)-\bigcup_{j=1}^{s-1} V(C_j)$ such that $x \ne v_s$, do the following:
\begin{enumerate}
\item Split $x$ into $m$ vertices $x^{(1)},\ldots, x^{(m)}$.
\item For each vertex $y$ in $\cal G_{0}$, $y \ne x$, that is also a vertex of $\cal G_{s-1}$ (that is, $y$ has not been split yet), replace the set of $\lambda m^2$ parallel edges $xy$ (partitioned into $\lambda m$ colour classes of size $m$) with a decomposition of $\lambda mK_{1,m}$ into $\lambda m$ copies of $K_{1,m}$. That is, each of the $\lambda m$ colour classes of edges with one endpoint $y$ and the other in $\{x^{(1)},\ldots, x^{(m)} \}$ is isomorphic to $K_{1,m}$.
\item For each vertex $y$ in $\cal G_{0}$ that has already been split into vertices $y^{(1)}, \ldots, y^{(m)}$ of $\cal G_{s-1}$, the subgraph of $\cal G_{s-1}$ induced by the vertex set $\{ x,y^{(1)}, \ldots, y^{(m)} \}$ is (by the previous paragraph) isomorphic to $\lambda mK_{1,m}$ decomposed into $\lambda m$ colour classes $K_{1,m}$. After splitting the vertex $x$, replace this induced subgraph with a 1-factorization of $\lambda K_{m,m}$ (that is, each of the $\lambda m$ colour classes is a 1-factor in $\lambda K_{m,m}$).
\end{enumerate}
\item Observe that at this point, all vertices of $C_s$ except $v_s$ have been split, and the $s$-th colour class consists of $m$ cycles of length $c_s$ joined at a single vertex, namely, $v_s$.
\item Let $w_s$ be a vertex adjacent to $v_s$ in $C_s$. Split vertex $v_s$ into $m$ vertices $v_s^{(1)},\ldots, v_s^{(m)}$, and repeat  Steps (2b) and (2c) for $x=v_s$ and all vertices $y \ne w_s$.
\item Observe that vertex $w_s$ has already been split into vertices $w_s^{(1)},\ldots, w_s^{(m)}$. Replace the subgraph induced by the vertex set $\{v_s,w_s^{(1)},\ldots, w_s^{(m)} \}$, which is isomorphic to $\lambda mK_{1,m}$ decomposed into $\lambda m$ colour classes $K_{1,m}$, with a 1-factorization of $\lambda K_{m,m}$, first choosing the edges of the 1-factor corresponding to colour class $s$ so that the $s$-th colour class becomes a vertex-disjoint union of cycles of lengths $q_{s,1}c_s,\ldots,q_{s,r_s}c_s$ (this is possible by Lemma~\ref{lem:flower}).
\end{enumerate}
We shall now verify that graphs $\cal G_{1}, \ldots, \cal G_{N}$ satisfy Properties (R0$'$)--(R2$'$). From the construction, it is clear that Properties (R0$'$) and (R1$'$) hold; the latter holds since for each colour $i$, the number of edges of colour $i$ in $\cal G_{s-1}$ and $\cal G_{s}$ are equal. To see Property (R1$''$), observe that the degree of any vertex $x$ in $\cal G_0(i)$, for any colour $i \in \{ 1,\ldots, k\}$, is in $\{ 0, 2m \}$, and when vertex $x$ is split into vertices $x^{(1)},\ldots, x^{(m)}$, the degree of each vertex $x^{(j)}$ in the $i$-th colour class will be in $\{ 0,2 \}$. Since in $\cal G_s$, all vertices from $C_1,\ldots,C_s$ have already been split, Property (R1$''$) follows. Lastly, Step 5 guarantees that the non-trivial connected components of the colour class $\cal G_s(s)$  are cycles of lengths $q_{s,1}c_s,\ldots,q_{s,r_s}c_s$, while the non-trivial connected components of each colour class $\cal G_s(i)$ for $i<s$ are identical to those of $\cal G_{s-1}(i)$. Hence Property (R2$''$) holds as well.

Because $V(C_i) \subseteq \bigcup_{j=1}^{N} V(C_j)$ for all $i>N$, we have $\bigcup_{j=1}^{N} V(C_j)=V(\cal G_0)$. Thus,  all vertices in $\cal G_N$ have already been split (each into $m$ mutually non-adjacent vertices), and so $\cal G_N$ is isomorphic to $\lambda K_{n \times m}$. Moreover, by Properties (R1$'$) and (R1$''$), the $i$-th colour class (for $i \le k$) in $\cal G_N$ is a vertex-disjoint union of cycles with $c_im$ edges altogether. Finally, by Property (R2$'$), the colour class $\cal G_N(i)$, for all $i \in \{1,\ldots,N\}$,  is a vertex-disjoint union of cycles of lengths $q_{i,1}c_i,\ldots,q_{i,r_i}c_i$, plus isolated vertices. If we now define each $F_i$ (for $i=1,2,\ldots,k$) as the union of non-trivial connected components of the colour class $\cal G_N(i)$, then $\lambda K_{n \times m}$ has been decomposed into subgraphs $F_1,\ldots,F_k$ with Properties (R1)--(R2) as required.

The proof for the case when $\lambda m(n-1)$ is odd is very similar, so we shall only highlight the differences. We start with a presumed decomposition of $\lambda m K_n$ into cycles $C_1,C_2,\ldots,C_k$ (with the specified ordering) and a 1-factor. From this, we obtain a decomposition of $\lambda m^2 K_n$ into $m$-fold cycles and an $m$-fold 1-factor. We then let $\cal G_0$ be the graph $\lambda m^2 K_n$ with a $(k+1)$-edge colouring $f_0$ arising from this decomposition; that is, the colour class $\cal G_0(i)$, for $i \le k$, consists of the $m$-fold cycle of length $c_i$ together with the remaining $n-c_i$ isolated vertices, while the colour class $\cal G_0(k+1)$ is an $m$-fold 1-factor.

We then proceed to construct, for $s=1,2,\ldots,N$, a graph $\cal G_s$ and its $(k+1)$-edge colouring $f_s$ satisfying Properties (R0$'$)--(R2$'$), where Properties (R1$'$) and (R1$''$) are modified as follows:
\begin{description}
\item[(R1$'$)] for all $i \in \{1,\ldots,k\}$, the colour class $\cal G_s(i)$ has $c_im$ edges, while $\cal G_s(k+1)$ has $\frac{mn}{2}$ edges; and
\item[(R1$''$)] for all colours $i \in \{1,\ldots,k+1\}$ and all vertices $x \in  \bigcup_{j=1}^{s} V(C_j)$, if $x^{(j)}$ denotes a vertex of $\cal G_s$ obtained by splitting $x$, then  $d_{\cal G_s(i)}(x^{(j)}) \in \{0,2\}$ for $i \le k$, and $d_{\cal G_s(k+1)}(x^{(j)})=1$.
\end{description}
The construction (Steps 1-5) is performed exactly as in the first case, and the verification is very similar and hence left to the reader.
\end{proof}

In Theorem~\ref{additional-1}, we were able to split $N$ of the 2-regular subgraphs into cycles of desired lengths (divisible by the corresponding $c_i$). How large can $N$ be? Since for all $i=2,\ldots,N$, the cycle $C_i$ adds at least one vertex to $\bigcup_{j=1}^{i-1} V(C_j)$, we must have
$$c_1+(N-1) \le \big\vert \bigcup_{j=1}^{N} V(C_j) \big\vert \le n.$$ Hence $N \le n-c_1+1$. In the next example we describe a case in which Theorem~\ref{additional-2} (to follow below)  will give an improvement.

\begin{example}\rm{
Let $n$ be odd, each $c_i \ge 3$, and $\sum_{i=1}^k c_i={n \choose 2}$. Then, by Theorem~\ref{BryHorsPett},  there exists a decomposition of $K_n$ into cycles $C_1^*,C_2^*,\ldots,C_k^*$, where cycle $C_i^*$ is of length $c_i$, for all $i$. Taking each of these cycles with multiplicity $m$ we obtain a decomposition of $mK_n$ into $mk$ cycles of lengths $c_1,\ldots,c_1,c_2,\ldots,$ $c_2,\ldots,c_k,\ldots,c_k$ ($m$ repetitions of each $c_i$). If we now find an ordering $C_1,\ldots,C_{mk}$ of these cycles, and an index $N$ such that the assumptions of Theorem~\ref{additional-1} are satisfied, then $N \le n-c_1+1$ by the preceding paragraph. In the next result, we'll show that we can do better: at least $k$ of the 2-regular subgraphs in the decomposition of $K_{n \times m}$ that results by splitting each vertex in $mK_n$ into $m$ vertices will consist of cycles of specified lengths. When most cycle lengths among $c_1, \dots,c_k$ are small, their number $k$ may be quadratic in $n$, and so we'll have $n-c_1+1 < k$ for $n$ sufficiently large.
}
\end{example}

\begin{theo}\label{additional-2}
Let $n$ be odd, each $c_i \ge 3$, and $\sum_{i=1}^k c_i={n \choose 2}$. For all $i \in \{1,\ldots,k\}$, let $r_i$ and $q_{i,1},\ldots,q_{i,r_i}$ be positive  integers such that $q_{i,1}+\ldots +q_{i,r_i}=m$. Then the complete equipartite graph $K_{n \times m}$ can be decomposed into subgraphs $F_1,\ldots,F_{mk}$, plus a 1-factor if $n$ is even, such that
\begin{description}
\item[(S1)] for $i=1,\ldots,k$ and $j=0,\ldots,m-1$, the subgraph $F_{jk+i}$ is 2-regular and has $c_im$ edges, and
\item[(S2)] for $i \in \{1,\ldots,k\}$, the subgraph $F_{i}$ is a vertex-disjoint union of cycles of lengths $q_{i,1}c_i,\ldots,q_{i,r_i}c_i$.
\end{description}
\end{theo}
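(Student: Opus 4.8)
\medskip\noindent\emph{Proposed proof.}
The plan is to avoid the global detachment of $mK_n$ that underlies Theorem~\ref{additional-1} (and forces the bound $N\le n-c_1+1$), and instead to carve $K_{n\times m}$ into $k$ edge-disjoint blocks, one for each cycle of a base decomposition of $K_n$. Since $n$ is odd, $c_i\ge 3$, and $\sum_{i=1}^k c_i=\binom n2=n\lfloor\tfrac{n-1}2\rfloor$, Theorem~\ref{BryHorsPett} yields a decomposition of $K_n$ into cycles $C_1^*,\dots,C_k^*$ with $|C_i^*|=c_i$. Identify $V(K_{n\times m})$ with $\{(v,a):v\in V(K_n),\ a\in\ZZ_m\}$, the part indexed by $v$ being $P_v=\{(v,a):a\in\ZZ_m\}$, so that $(v,a)$ and $(w,b)$ are adjacent exactly when $v\ne w$. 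For each $i$ let $\mathcal{B}_i$ be the cyclic blow-up of $C_i^*$: its vertex set is $\bigcup_{v\in V(C_i^*)}P_v$, and for every edge $\{v,w\}$ of $C_i^*$ it contains all $m^2$ edges between $P_v$ and $P_w$. Because $C_1^*,\dots,C_k^*$ partition $E(K_n)$, the subgraphs $\mathcal{B}_1,\dots,\mathcal{B}_k$ partition $E(K_{n\times m})$; each $\mathcal{B}_i$ is a simple $(2m)$-regular graph with $c_im^2$ edges. (The clause ``plus a $1$-factor if $n$ is even'' is vacuous, as the hypotheses force $n$ odd, hence $K_{n\times m}$ of even degree $m(n-1)$.)

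\medskip
It now suffices to prove the following claim about the blow-up of a single cycle, to be applied with $C=C_i^*$, $c=c_i$, and $(q_1,\dots,q_r)=(q_{i,1},\dots,q_{i,r_i})$ for each $i$: \emph{if $C$ is a cycle of length $c$ and $q_1,\dots,q_r$ are positive integers with $\sum_{j=1}^r q_j=m$, then the cyclic blow-up $\mathcal{B}$ of $C$ decomposes into $m$ spanning $2$-regular subgraphs $F^{(0)},\dots,F^{(m-1)}$, each with $cm$ edges, where $F^{(0)}$ is a vertex-disjoint union of cycles of lengths $q_1c,\dots,q_rc$.} Granting this, decompose each $\mathcal{B}_i$ into blocks $F_i^{(0)},\dots,F_i^{(m-1)}$ as in the claim and set $F_{jk+i}:=F_i^{(j)}$ (regarded as an isolated-vertex-free subgraph of $K_{n\times m}$) for $i\in\{1,\dots,k\}$, $j\in\{0,\dots,m-1\}$; as $(i,j)$ ranges over these values $jk+i$ ranges over $\{1,\dots,mk\}$. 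Property (S1) holds since each $F_i^{(j)}$ is $2$-regular with $c_im$ edges, and (S2) holds since $F_i=F_i^{(0)}$ is a vertex-disjoint union of cycles of lengths $q_{i,1}c_i,\dots,q_{i,r_i}c_i$ (each of length at least $c_i\ge3$). Finally the $F_{jk+i}$ partition $E(K_{n\times m})$ because the $\mathcal{B}_i$ do and the $F_i^{(j)}$ do within each $\mathcal{B}_i$.

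\medskip
To prove the claim, write $C=v_0v_1\cdots v_{c-1}v_0$ and label $P_{v_j}=\{(j,a):a\in\ZZ_m\}$. Choose a permutation $\sigma\in S_m$ whose cycle lengths are exactly $q_1,\dots,q_r$ (possible since $\sum q_j=m$), put $\pi_0=\sigma$ and $\pi_1=\dots=\pi_{c-1}=\mathrm{id}$, and for $j\in\ZZ_c$, $t\in\ZZ_m$ let $M_t^{(j)}=\{\{(j,a),(j+1,\pi_j(a)+t)\}:a\in\ZZ_m\}$ (first coordinate read mod $c$, second mod $m$). For each fixed $j$ the matchings $M_0^{(j)},\dots,M_{m-1}^{(j)}$ form a $1$-factorization of the copy of $K_{m,m}$ between $P_{v_j}$ and $P_{v_{j+1}}$, so $F^{(t)}:=\bigcup_{j\in\ZZ_c}M_t^{(j)}$, $t\in\ZZ_m$, partition $E(\mathcal{B})$; each $F^{(t)}$ is $2$-regular with $cm$ edges since every vertex $(j,a)$ lies in exactly one edge of $M_t^{(j)}$ and one of $M_t^{(j-1)}$. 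Tracing $F^{(0)}$ from $(0,a)$: because $\pi_1=\dots=\pi_{c-1}=\mathrm{id}$, one lap around $C$ returns to $P_{v_0}$ at $(0,\sigma(a))$, so $\ell$ laps return at $(0,\sigma^\ell(a))$; hence the cycle of $F^{(0)}$ through $(0,a)$ has length $c$ times the length of the $\sigma$-orbit of $a$, and $F^{(0)}$ is a vertex-disjoint union of cycles of lengths $q_1c,\dots,q_rc$. (Equivalently, identifying $P_{v_0}$ to a single vertex turns $F^{(0)}$ into a flower of $m$ $c$-cycles through that vertex, and $F^{(0)}$ is one of the detachments furnished by Lemma~\ref{lem:flower}(2).)

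\medskip
The argument is essentially routine once the opening reduction is in place; the one genuine idea is to split $K_{n\times m}$ into the cyclic blow-ups $\mathcal{B}_i$ of the base cycles $C_i^*$ rather than detaching $mK_n$ all at once, which is exactly what removes the ordering obstruction of Theorem~\ref{additional-1} and lets \emph{every} one of the first $k$ $2$-regular subgraphs be prescribed. The only place needing a small check is that, after pulling off the distinguished subgraph $F^{(0)}$, the remaining $(2m-2)$-regular graph still decomposes into $m-1$ well-behaved $2$-regular pieces; the explicit $1$-factorization $\{M_t^{(j)}\}_t$ of each $K_{m,m}$ chosen above makes this automatic.
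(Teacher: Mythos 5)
Your proof is correct and is essentially the paper's own argument in different clothing: the paper likewise starts from a Bryant--Horsley--Pettersson decomposition of $K_n$, groups the $m$ copies of each base cycle $C_i^*$ so that between consecutive parts a full $K_{m,m}$ arises and is $1$-factorized, and adjusts one $1$-factor per base cycle (via Lemma~\ref{lem:flower}) to realize the prescribed lengths $q_{i,1}c_i,\dots,q_{i,r_i}c_i$. Your direct ``cyclic blow-up'' formulation with the explicit shift-matchings $M_t^{(j)}$ merely replaces the paper's detachment/edge-lifting bookkeeping with a self-contained construction of the same decomposition.
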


\begin{proof}
By Theorem~\ref{BryHorsPett}, there exists a $(c_1,\ldots,c_k)$-cycle decomposition of $K_n$. Taking $m$ copies of each of these cycles, we obtain a decomposition of $mK_n$ into $mk$ cycles of lengths $c_1,\ldots,c_1,c_2,\ldots,c_2,\ldots,c_k,\ldots,c_k$ ($m$ repetitions of each $c_i$). Label these cycles by $C_1,\ldots,C_{mk}$ and order them in such a way that the length of cycle $C_{jk+i}$, for $i=1,\ldots,k$ and $j=0,\ldots,m-1$, is $c_i$. Note that with this ordering, each pair of distinct vertices $x,y$ of $m K_n$ are adjacent in exactly one of the cycles $C_1, \ldots,C_k$.

Next, we obtain a decomposition of $\cal G=m^2 K_n$ into $m$-fold cycles $C_1',\ldots,C_{mk}'$ by replacing each edge in each cycle $C_i$ with $m$ parallel edges. Let $f$ be an $mk$-edge colouring of $\cal G$ arising from this cycle decomposition; that is, the colour class $\cal G(i)$ consists of the $m$-fold cycle $C_i'$ (with $m|V(C_i')|$ edges) together with the remaining $n-|V(C_i')|$ isolated vertices. Observe that for each pair of distinct vertices $x,y$, the subgraph of $\cal G$ induced by $\{ x,y\}$ is $m^2K_2$ decomposed into $m$ colour classes, each isomorphic to $mK_2$.

Obtain a graph $\cal G^*$ with an $mk$-edge colouring $f^*$ from $\cal G$ and $f$ as follows. First, split every vertex $x$ in $\cal G$ into $m$ vertices $x^{(1)},\ldots,x^{(m)}$. Then, for $s=1,\ldots,k$, ``lift'' the edges of the cycle $C_s$ as follows:
\begin{enumerate}
\item Choose an edge $uv$ of the cycle $C_s$.
\item For each pair $\{x,y\}$ of vertices adjacent in $C_s$, such that $\{x,y\} \ne \{ u,v\}$, perform the following operation on the subgraph of $\cal G$ induced by $\{x,y\}$: replace the decomposition of $m^2 K_2$ (subgraph of $\cal G$)
    into $m$ colour classes isomorphic to $mK_2$ with a 1-factorization of $K_{m,m}$ (subgraph of $\cal G^*$) so that each of the $m$ colour classes is now a 1-factor with $m$ edges.
\item Replace the subgraph of $\cal G$ induced by $\{ u,v\}$  with a 1-factorization of $K_{m,m}$ (subgraph of $\cal G^*$), first choosing the 1-factor corresponding to colour class $s$ so that the resulting colour class in $\cal G^*$ (induced by the set of vertices obtained by splitting all vertices of $C_s$) is a vertex-disjoint union of cycles of lengths $q_{s,1}c_s,\ldots,q_{s,r_s}c_s$ (this is possible by Lemma~\ref{lem:flower}).
\end{enumerate}
First, it is clear from the construction that $\cal G^*$ is isomorphic to $K_{n \times m}$. Next, observe that each colour class of $\cal G$ gives rise to a colour class in $\cal G^*$ of the same size; that is, each cycle $C_{jk+i}$ of length $c_i$ in $mK_n$ gives rise to an $m$-fold cycle $C_{jk+i}'$ in $\cal G$ with $c_im$ edges, which  gives rise to a colour class $\cal G^*(jk+i)$ with $c_im$ edges in $K_{n \times m}$. It is also easy to see that every vertex of $\cal G^*$ has degree 0 or 2 in each colour class $\cal G^*(jk+i)$. Hence the non-trivial connected components of $\cal G^*(jk+i)$  will form a 2-regular graph $F_{jk+i}$ with $c_im$ edges, thus satisfying Property (S1). Furthermore, Step 3 of the construction ensures that the 2-regular subgraphs $F_1,\ldots,F_k$ will consists of cycles of specified lengths, yielding Property (S2).
\end{proof}

Observe that attempting to extend the proof of Theorem~\ref{additional-2} to the case $n$ is even (that is, starting with a decomposition of $K_n$ into cycles plus a 1-factor) results in a decomposition of $K_{n \times m}$ into 2-regular subgraphs satisfying Properties (S1)--(S2) plus $\frac{n}{2}$ copies of $K_{m,m}$, rather than a 1-factor.

\section{Cycle decompositions of $\lambda K_{n \times m}$ for $2 \le n \le 4$}

In this section, we use Theorem~\ref{mainthmultialspach} to find particular cycle decompositions of complete equipartite multigraphs with a small number of parts, most of which were not known before. In the next three lemmas, we first construct all possible decompositions of $\mu K_n$, for $n=2,3,4$, into cycles of variable lengths. The first of these three lemmas is obvious, hence the proof is omitted.

In this section, a decomposition of a graph $\cal G$ into $m_1$ cycles of length $c_1$, $m_2$ cycles of length $c_2$, $\ldots$, and $m_\ell$ cycles of length $c_{\ell}$, plus a 1-factor if each vertex in $\cal G$ is of odd degree, will be abbreviated as $(c_1^{(m_1)},\ldots,c_{\ell}^{(m_{\ell})})$-CD.

\begin{lemma}\label{lem:n=2}
There exists a $(2, 2, \ldots, 2)$-CD of $\mu K_{2}$.
\end{lemma}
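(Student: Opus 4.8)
The statement to prove is trivial: $\mu K_2$ consists of $\mu$ parallel edges between two vertices, and a $2$-cycle is exactly a pair of parallel edges (a digon). The plan is simply to split into parity cases.

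\medskip

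\textbf{Plan.} The graph $\mu K_2$ has two vertices, say $u$ and $v$, joined by exactly $\mu$ edges. Recall that a cycle of length $2$ in a multigraph is a pair of parallel edges. First I would treat the case $\mu$ even: then $\mu = 2t$, and partitioning the $\mu$ edges into $t$ disjoint pairs yields $t$ edge-disjoint $2$-cycles whose union is all of $\mu K_2$; this is a $(2,2,\ldots,2)$-cycle decomposition (with $t$ copies of $2$), as required, and there is no $1$-factor since every vertex has even degree $\mu$. Next, the case $\mu$ odd: here each vertex has odd degree $\mu$, so by the convention fixed in Section~1 a $(2,2,\ldots,2)$-CD means a decomposition into $2$-cycles plus a $1$-factor. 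Writing $\mu = 2t+1$, I set aside one of the $\mu$ edges $uv$ to serve as the $1$-factor of $K_2$ (a single edge covering both vertices), and partition the remaining $2t$ edges into $t$ disjoint $2$-cycles. This gives the desired decomposition.

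\medskip

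There is essentially no obstacle here; the only thing to be careful about is matching the statement to the conventions established earlier, namely that $(c_1,\ldots,c_k)$-cycle decomposition (and the abbreviation $(c_1^{(m_1)},\ldots)$-CD) silently includes the extra $1$-factor precisely when the vertex degrees are odd. Since $\mu$ and the common degree $\mu$ of $K_2$ have the same parity, the two cases above cover everything, and in each case the construction is immediate from the definition of a $2$-cycle as a digon. As the paper notes, the proof is routine and may simply be omitted.
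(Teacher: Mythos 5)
Your proof is correct and is exactly the argument the paper has in mind; the paper simply omits it as obvious ("The first of these three lemmas is obvious, hence the proof is omitted"). Your parity split, with one edge set aside as the $1$-factor when $\mu$ is odd, matches the conventions of Section~1 precisely.
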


It is not difficult to verify that the necessary and sufficient conditions in Lemmas~\ref{lem:n=3} and \ref{lem:n=4} below are equivalent to Conditions (B1)--(B4) from Section 1 (for $n=3$ and $n=4$, respectively), however, they do not imply the conditions of Theorem~\ref{BryHorMaeSmi}.

\begin{lemma}\label{lem:n=3}
There exists a $(2^{(a)},3^{(b)})$-CD of $\mu K_{3}$ if and only if $2a+3b=3 \mu$.
\end{lemma}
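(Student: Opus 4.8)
The plan is to prove the two directions separately, with the necessity being essentially the specialization of Conditions (B1)--(B2) to $n=3$ and the sufficiency handled by an explicit constructive argument.

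\medskip

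For necessity, suppose $\mu K_3$ admits a $(2^{(a)},3^{(b)})$-CD. The graph $\mu K_3$ has $3\mu$ edges, each $2$-cycle uses $2$ edges, each $3$-cycle uses $3$ edges, and a $1$-factor on $3$ vertices is impossible, so the degree $2\mu$ is even and no $1$-factor appears; hence $2a+3b=3\mu$. (The condition $c_i\le n=3$ is automatic here since we only use lengths $2$ and $3$.)

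\medskip

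For sufficiency, assume $2a+3b=3\mu$ with $a,b\ge 0$. First I would reduce modulo small cases. Observe that $3\mu\equiv 2a \pmod 3$, so $a\equiv 0\pmod 3$ exactly when $3\mid\mu$; more usefully, write $a=3a'+\rho$ where the residue $\rho\in\{0,1,2\}$ is forced by $\mu\bmod 3$, and note that three $2$-cycles on the same pair of vertices can be traded for nothing directly, but a $6$-cycle's worth of edges ($=2K_3$, which is three $2$-cycles) can also be written as two $3$-cycles. So the elementary trading move is: $\{$three $2$-cycles on the triangle, one on each edge$\}\leftrightarrow\{$two $3$-cycles$\}$, since both use the edge set of $2K_3$. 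Thus from any single valid decomposition of $\mu K_3$ one obtains all others with the same $\mu$ by repeatedly applying this swap (which changes $(a,b)$ by $(\mp 3,\pm 2)$), and one checks this move connects all nonnegative integer solutions of $2a+3b=3\mu$. Concretely: if $\mu$ is even, start from $\mu K_3$ decomposed into $3\mu/2$ copies of $2$-cycles (take $\mu/2$ copies of $2K_3$ on each of the three edges — wait, more simply: $\mu K_3$ has each edge of multiplicity $\mu$, so it is $\mu$ disjoint... no, the three edges share vertices), so instead: $\mu K_3$ is the edge-disjoint union of $\mu$ triangles when $\mu$ is anything? No — $\mu K_3$ decomposes into $\mu$ copies of $C_3$ always (put one $C_3$ per "layer"). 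That gives $(3^{(\mu)})$-CD, i.e. $(a,b)=(0,\mu)$. Then apply the swap in reverse — $\{$two $3$-cycles$\}\to\{$three $2$-cycles$\}$ — which is always legal: pick two of the $\mu$ triangle-layers, their union is $2K_3$, re-decompose as one $2$-cycle per edge. Each such operation sends $(a,b)\mapsto(a+3,b-2)$, and iterating $t$ times gives $(3t,\mu-2t)$ for $0\le t\le\lfloor\mu/2\rfloor$. To also reach solutions with $a\not\equiv 0\pmod 3$ I need base decompositions for $\mu=1$ and $\mu=2$ as well: for $\mu=1$, $3\cdot 1=3$ forces $(a,b)=(0,1)$; for $\mu=2$, $2a+3b=6$ gives $(a,b)\in\{(0,2),(3,0)\}$, both realized (two triangles, or the three $2$-cycles). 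For general $\mu$, decompose $\mu K_3=(\mu-2\lfloor\mu/2\rfloor)K_3\,\cup\,\lfloor\mu/2\rfloor\cdot 2K_3$ and combine the base decomposition of the small leftover piece ($0K_3$, $K_3$, or use $2K_3$ directly) with the choice, for each of the $\lfloor\mu/2\rfloor$ copies of $2K_3$, of either two $3$-cycles or three $2$-cycles; this realizes every $(a,b)$ with $a$ in the correct residue class mod $3$ and $b\ge 0$, which is exactly the full solution set of $2a+3b=3\mu$ in nonnegative integers.

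\medskip

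The only real content is the bookkeeping that the trading move plus the $\mu\in\{1,2\}$ base cases hit every nonnegative solution, and I expect that to be the step most prone to off-by-one slips — in particular making sure the residue $a\bmod 3$ is pinned down correctly by $\mu\bmod 3$ and that $b$ never goes negative. Everything else is immediate from the fact that $2K_3$ has a decomposition into three $2$-cycles and another into two $3$-cycles, and that $\mu K_3$ trivially splits into $\mu$ single-multiplicity triangles.
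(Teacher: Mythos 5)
Your proposal is correct and arrives at essentially the same decomposition as the paper's proof: $b$ triangle layers plus $a$ digons, resting on the observation that $2K_3$ splits into three $2$-cycles (the paper places the $b$ triangles directly and then pairs off the remaining $\mu-b$ parallel edges on each side, rather than swapping down from the all-triangle decomposition, but the end result is identical). One remark: your worry about solutions with $a\not\equiv 0\pmod 3$ is vacuous, since $2a+3b=3\mu$ forces $3\mid a$, so the $\mu\in\{1,2\}$ base cases and the residue bookkeeping are unnecessary --- the single chain $(3t,\mu-2t)$ for $0\le t\le\lfloor\mu/2\rfloor$ already exhausts all nonnegative solutions.
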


\begin{proof}
Counting the edges in all cycles, we can see that if there exists a $(2^{(a)},3^{(b)})$-CD of $\mu K_{3}$, then $2a+3b=3 \mu$.

Conversely, if $2a+3b=3 \mu$, then we can find a required decomposition as follows. Take $b$ cycles of length 3; this is possible since $b=\frac{1}{3}(3\mu -2a)\le \mu$. We now have $\mu-b$ (necessarily an even number) of parallel edges left over between each pair of distinct vertices. Hence the remaining edges can be partitioned into $\frac{1}{2}(\mu {3 \choose 2}-3b)=a$  cycles of length 2.
\end{proof}

\begin{lemma}\label{lem:n=4}
There exists a $(2^{(a)},3^{(b)},4^{(c)})$-CD of $\mu K_{4}$ if and only if
\begin{itemize}
\item $2a+3b+4c=6 \mu-\delta$, where $\delta=0$ if $\mu$ is even, and $\delta=2$ if $\mu$ is odd; and
\item $(b,c) \ne (0,1)$ if $\mu$ is even, and $(b,c) \ne (0,0)$ if $\mu$ is odd.
\end{itemize}
\end{lemma}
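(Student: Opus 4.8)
The two displayed conditions are straightforward. Counting edges --- and subtracting the two edges of the 1-factor when $\mu$ is odd --- gives $2a+3b+4c=6\mu-\delta$; in particular $3b=6\mu-\delta-2a-4c$ is even, so $b$ is forced to be even. For the forbidden pairs I would argue modulo $2$: in any $(2^{(a)},3^{(b)},4^{(c)})$-CD the union of the 2-cycles has all edge-multiplicities even, so reducing mod $2$ leaves only the triangles, the 4-cycles, and (if $\mu$ is odd) the 1-factor. If $\mu$ is even, then $\mu K_4$ itself has all multiplicities even, so the triangles and 4-cycles together must have all multiplicities even; but a single 4-cycle has four edges each of multiplicity $1$, which rules out $(b,c)=(0,1)$. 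If $\mu$ is odd, deleting a 1-factor of $K_4$ from $\mu K_4$ leaves four of the six edge-classes with odd multiplicity, so the triangles and 4-cycles cannot all have even multiplicity, ruling out $(b,c)=(0,0)$.

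\textbf{Sufficiency, the plan.} The idea is to build $\mu K_4$ out of copies of $2K_4$ (plus one $K_4$ or one $3K_4$ when $\mu$ is odd) and assemble the cycles block by block. The engine is that $2K_4$ has exactly the $(a,b,c)$-decompositions $(6,0,0)$ (six 2-cycles), $(0,0,3)$ (its three Hamilton cycles), $(2,0,2)$ (a Hamilton cycle taken twice plus the two 2-cycles on the complementary pair of edges), $(0,4,0)$ (its four triangles, whose union is $2K_4$), $(3,2,0)$ (a doubled triangle plus three 2-cycles on the complementary star), and $(1,2,1)$ (two triangles on a common edge, plus a 2-cycle, plus a Hamilton cycle); the only other candidate, $(4,0,1)$, is excluded by the parity argument above. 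When $\mu=2t$, write $\mu K_4=t\cdot 2K_4$ and show, by a short case analysis on $c\bmod 3$ (treating $c=1$ separately and distinguishing the parity of $b/2$ when $c\equiv 1\pmod 3$), that every triple with $2a+3b+4c=6\mu$ and $(b,c)\neq(0,1)$ is a sum of $t$ of these six block-vectors: realize the triangles with $(0,4,0)$- and $(3,2,0)$-blocks (about $b/4$ of them), the 4-cycles with $(0,0,3)$- and $(2,0,2)$-blocks (about $c/3$ of them), and fill the rest with $(6,0,0)$-blocks, using the fact that $a\geq 0$, i.e. $3(b/2)+2c\leq 6t$, is exactly the inequality that forces the block count to be at most $t$. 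The handful of cases where the naive count is a hair too large --- $b/2$ odd together with $c\equiv 1\pmod 3$ and $a$ minimal, and the boundary case $c=1$ (where $b\geq 2$ is forced) --- are rescued by the mixed block $(1,2,1)$, which pays into $b$ and $c$ at once.

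\textbf{The odd case.} When $\mu=2t+1$ and $c\geq 1$, peel off one copy of $K_4$, decompose it as a Hamilton cycle plus a 1-factor (supplying one 4-cycle and the required 1-factor), and reduce to realizing $(a,b,c-1)$ on $t\cdot 2K_4$ via the even case; the only value this misses is $(b,c)=(0,2)$, which would demand the forbidden $(a,0,1)$-pattern on $t\cdot 2K_4$ and is instead obtained by peeling off $3K_4$ and using its decomposition into two Hamilton cycles, four 2-cycles, and a 1-factor. When $c=0$ the hypothesis $(b,c)\neq(0,0)$ forces $b\geq 2$ (hence $\mu\geq 3$), and I would peel off $3K_4$, decompose it (with a 1-factor) as $(5,2,0)$ or $(2,4,0)$ to seed two or four triangles, and finish with $(t-1)\cdot 2K_4$ using only the $(0,4,0)$-, $(3,2,0)$-, and $(6,0,0)$-blocks.

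\textbf{Where the work is.} The constructions themselves are all essentially one-line; the genuinely tedious part is the bookkeeping --- verifying in each residue-and-parity case that the number of non-filler $2K_4$-blocks does not exceed the number available. This ultimately comes down to the single inequality $3(b/2)+2c\leq 6\cdot(\#\,2K_4\text{-blocks})$ together with a careful choice of which block types to spend near the extremes of the parameter range, plus isolating the few small degenerate cases ($c\in\{0,1\}$, $\mu\in\{1,3\}$, and the pair $(0,2)$) that the generic argument does not directly cover. I expect this case management, rather than any individual construction, to be the main obstacle.
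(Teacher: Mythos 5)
Your proposal is correct and follows the same broad strategy as the paper --- assembling $\mu K_4$ from small blocks with prescribed cycle decompositions --- but the packaging is genuinely different and, I think, cleaner. The paper first peels off $2b'K_4$ and $2c'K_4$ (decomposed into triangles and $4$-cycles via the four-triangle and three-Hamilton-cycle decompositions of $2K_4$) and then handles a residual $\eta K_4$ with a case analysis on $b\bmod 4$, $c\bmod 3$, and the parity of $\eta$, using a bespoke list of auxiliary blocks such as $2(K_4-P_1)$, $K_4+2P_1$, $K_4+2I_4$, $3K_4-2I_4$, and $3K_4-2P_2$. You instead isolate the odd part of $\mu$ up front as a single $K_4$ or $3K_4$ (carrying the $1$-factor) and express the rest as a sum of $t$ vectors from the complete list of six feasible $(a,b,c)$-types of $2K_4$; your list is correct and exhaustive, and the mixed block $(1,2,1)$ (two triangles sharing an edge, a $2$-cycle, and a Hamilton cycle) plays exactly the role of the paper's D6. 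I checked the less obvious ingredients: the $(1,2,1)$-CD of $2K_4$ exists, and $3K_4$ does admit both a $(2^{(4)},4^{(2)})$-CD and a $(2^{(2)},3^{(4)})$-CD with a $1$-factor (the latter requires placing the two $2$-cycles on a pair of \emph{adjacent} edges, e.g.\ two copies of one triangle plus one copy of each of two others --- worth stating explicitly, since the ``obvious'' placement on a matching fails). The deferred bookkeeping does close: in each residue class of $(b/2 \bmod 2,\ c\bmod 3)$ the congruence $a\equiv 6t-3b/2\cdot 2 -2c \pmod 6$ forces $a$ to be at least the total $a$-contribution of the non-filler blocks, which is exactly the inequality you need; this is no worse than the paper's own ``Cases 2--6 can be verified similarly to Case 1.'' Your necessity argument for the excluded pairs (reduction mod $2$) is also more explicit than the paper's, which omits it.
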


\begin{proof}
It is easy to verify the necessity of the conditions of the lemma.

Conversely, assume that $\mu$, $a$, $b$, and $c$ satisfy the two conditions of the lemma. We shall construct a $(2^{(a)},3^{(b)},4^{(c)})$-CD of $\mu K_{4}$  as follows. In addition to graphs $C_3$, $C_4$, and $K_4$ with standard symbols, the basic building blocks of this decomposition will be obtained from the following graphs:
\begin{itemize}
\item $2(K_4-P_1)$, the graph $2K_4$ with a pair of parallel edges removed (this graph five edges, each of multiplicity 2);
\item $K_4+2P_1$, the graph $K_4$ with an added pair of parallel edges (this graph has one edge of multiplicity 3 and five edges of multiplicity 1);
\item  $K_4+2I_4$, the graph $K_4$ with an added 2-fold 1-factor (this graph has two independent edges of multiplicity 3 and four edges of multiplicity 1);
\item $3K_4-2I_4$, the multigraph $3K_4$ with the edges of a 2-fold 1-factor removed (this graph has four edges of multiplicity 3 and two independent edges of multiplicity 1); and
\item $3K_4-2P_2$, the multigraph $3K_4$ with the edges of a 2-fold path of length two removed (this graph has four edges of multiplicity 3 and two adjacent edges of multiplicity 1).
\end{itemize}
Observe that each of these (multi)graphs has either no vertices of odd degree or no vertices of even degree.

It is easy to establish existence of the following auxiliary decompositions, to be used below. For convenience, we make a note of which of these include a 1-factor.
\begin{description}
\item[(D1)] a $(3^{(4)})$-CD of $2K_4$;
\item[(D2)] a $(4^{(3)})$-CD of $2K_4$;
\item[(D3)] a $(4^{(2)})$-CD of $2C_4$;
\item[(D4)] a $(3^{(2)})$-CD of $2C_3$;
\item[(D5)] a $(4^{(1)})$-CD of $K_4$ (includes a 1-factor);
\item[(D6)] a $(3^{(2)},4^{(1)})$-CD of $2(K_4-P_1)$;
\item[(D7)] a $(3^{(2)})$-CD of $K_4+2P_1$ (includes a 1-factor);
\item[(D8)] a $(4^{(2)})$-CD of $K_4+2I_4$ (includes a 1-factor);
\item[(D9)] a $(4^{(3)})$-CD of $3K_4-2I_4$ (includes a 1-factor); and
\item[(D10)] a $(3^{(4)})$-CD of $3K_4-2P_2$ (includes a 1-factor).
\end{description}
Observe that $b$ must be even. Let $b=4b'+b''$ and $c=3c'+c''$ where $b'' \in \{ 0,2 \}$ and $c'' \in \{ 0,1,2 \}$. In most cases (that is, unless stated otherwise), we start with a $(3^{(4b')})$-CD and $(4^{(3c')})$-CD of edge-disjoint subgraphs $2b' K_4$ and $2c' K_4$, respectively, of $\mu K_4$, using Decompositions D1 and D2. This is possible since
$$\eta=\mu-(2b'+2c')=\frac{1}{6}(6\mu-3(b-b'')-4(c-c'')) = \frac{1}{6}(2a+\delta+3b''+4c'') \ge 0.$$
Observe that $\eta$ is even if and only if $\mu$ is even. We are left to construct a $(2^{(a)},3^{(b'')},4^{(c'')})$-CD of  the complete multigraph $\eta K_4$.  Depending on the values of $\eta$, $b''$, and $c''$, this task can be accomplished as follows.

{\em Case 1: $b''=2$ and $c''=2$.} If $\eta$ is even, then $\eta\ge 4$. First find a $(4^{(2)})$-CD (D3) and $(3^{(2)})$-CD (D4) of edge-disjoint subgraphs $2C_4$ and $2C_3$, respectively, of $\eta K_4$.

If $\eta$ is odd, then $\eta\ge 3$. In $\eta K_4$, first find a $(4^{(1)})$-CD (D5) and $(3^{(2)},4^{(1)})$-CD (D6) of edge-disjoint subgraphs $K_4$ and $2(K_4-P_1)$, respectively.

In both subcases, we are left with an even number of edges between each pair of distinct vertices, so the remaining edges can be partitioned into $\frac{1}{2}(\mu {4 \choose 2} - \delta- (3(4b'+2)+4(3c'+2)))=a$ cycles of length 2.

{\em Case 2: $b''=2$ and $c''=1$.} If $\eta$ is even, then $\eta \ge 2$. First find a $(3^{(2)},4^{(1)})$-CD of $2(K_4-P_1)$ (D6). If $\eta$ is odd, then $\eta \ge 3$. First find a $(4^{(1)})$-CD of $K_4$ (D5) and a $(3^{(2)})$-CD of $2C_3$ (D4).

{\em Case 3: $b''=2$ and $c''=0$.} If $\eta$ is even, we have $\eta \ge 2$. First find a $(3^{(2)})$-CD of $2C_3$ (D4). If $\eta$ is odd, we have $\eta \ge 3$. First find a $(3^{(2)})$-CD of $K_4+2P_1$ (D7).

{\em Case 4: $b''=0$ and $c''=2$.} If $\eta$ is even, then $\eta \ge 2$. First find a $(4^{(2)})$-CD of $2C_4$ (D3). If $\eta$ is odd, then $\eta \ge 3$. First find a $(4^{(2)})$-CD of $K_4+2I_4$ (D8).

{\em Case 5: $b''=0$ and $c''=1$.} If $\eta$ is even, then $\eta \ge 2$. Observe that $b' \ge 1$ or $c' \ge 1$ since $(b,c) \ne (0,1)$. We now need to modify the initial  decomposition as follows. If $c' \ge 1$, start with a $(3^{(4b')})$-CD of $2b' K_4$ (D1) and $(4^{(3(c'-1))})$-CD of $2(c'-1) K_4$ (D2). We are left with $(\eta+2) K_4$, and we proceed with a $(4^{(4)})$-CD of $4C_4$ (D3). If $c'=0$, then $b'\ge 1$; start with a $(3^{(4(b'-1))})$-CD of $2(b'-1) K_4$ (D1). We are left with $(\eta+2) K_4$, and we proceed with a $(3^{(2)},4^{(1)})$-CD of $2(K_4-P_1)$ (D6) and a $(3^{(2)})$-CD of $2C_3$ (D4).

If $\eta$ is odd, then $\eta \ge 1$. First find a $(4^{(1)})$-CD of $K_4$ (D5).

{\em Case 6: $b''=0$ and $c''=0$.} If $\eta$ is even, then clearly $(2^{(a)})$-CD of $\eta K_4$ exists.

If $\eta$ is odd, then $\eta \ge 1$, and $b' \ge 1$ or $c' \ge 1$ since $(b,c) \ne (0,0)$. Again, we need to modify the initial  decomposition. If $c' \ge 1$, start with a $(3^{(4b')})$-CD of $2b' K_4$ (D1) and $(4^{(3(c'-1))})$-CD of $2(c'-1) K_4$ (D2). We are left with $(\eta+2) K_4$, and we proceed with a $(4^{(3})$-CD of $3K_4-2I_4$  (D9). If $c'=0$, then $b'\ge 1$; start with a $(3^{(4(b'-1))})$-CD of $2(b'-1) K_4$ (D1). We are left with $(\eta+2) K_4$, and we proceed with a $(3^{(4)})$-CD of $3K_4-2P_2$ (D10).

Cases 2-6 can be verified similarly to Case 1.
\end{proof}

We are now ready for the main result of this section. Note that Part 1 of Corollary~\ref{cor:smalln} below has been previously proved in \cite{LasAue}; we include it for completeness.

\begin{cor}\label{cor:smalln}
\begin{enumerate}
\item {\rm \cite{LasAue}} There exists a $(2m,\ldots,2m)$-CD  of $\lambda K_{2 \times m}$ (that is, a Hamilton cycle decomposition).
\item If $2a+3b=3\lambda m$, then there exists a $((2m)^{(a)},(3m)^{(b)})$-CD of $\lambda K_{3 \times m}$.
\item If $2a+3b+4c=6\lambda m - \delta$, where $\delta=0$ if $\lambda m$ is even, and $\delta=2$ if $\lambda m$ is odd, $(b,c) \ne (0,1)$ if $\lambda m$ is even, and $(b,c) \ne (0,0)$ if $\lambda m$ is odd, then there exists a $((2m)^{(a)},(3m)^{(b)},(4m)^{(c)})$-CD of $\lambda K_{4 \times m}$.
\end{enumerate}
\end{cor}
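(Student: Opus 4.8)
The plan is to derive Corollary~\ref{cor:smalln} directly from Theorem~\ref{mainthmultialspach} together with the three combinatorial lemmas established earlier in this section, namely Lemmas~\ref{lem:n=2}, \ref{lem:n=3}, and \ref{lem:n=4}. The key observation is that each part of the corollary corresponds to a statement about $\lambda K_{n \times m}$ with $n \in \{2,3,4\}$, and Theorem~\ref{mainthmultialspach} converts a $(c_1,\dots,c_k)$-cycle decomposition of $\lambda m K_n$ into a $(c_1m,\dots,c_km)$-cycle decomposition of $\lambda K_{n \times m}$. So in each case I would first set $\mu=\lambda m$, then apply the relevant lemma with this value of $\mu$ to obtain a cycle decomposition of $\mu K_n$ with the prescribed cycle lengths, and finally invoke Theorem~\ref{mainthmultialspach} to multiply every cycle length by $m$ and land in $\lambda K_{n \times m}$.

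For Part 1, I would apply Lemma~\ref{lem:n=2} with $\mu = \lambda m$, which gives a $(2,2,\ldots,2)$-CD of $\lambda m K_2$; by Theorem~\ref{mainthmultialspach} this yields a $(2m,2m,\ldots,2m)$-CD of $\lambda K_{2\times m}$, i.e.\ a Hamilton cycle decomposition since $\lambda K_{2\times m}$ has $2m$ vertices. For Part 2, given $2a+3b=3\lambda m$, I would apply Lemma~\ref{lem:n=3} with $\mu=\lambda m$ (the hypothesis $2a+3b=3\mu$ is exactly met) to get a $(2^{(a)},3^{(b)})$-CD of $\lambda m K_3$, and then Theorem~\ref{mainthmultialspach} produces a $((2m)^{(a)},(3m)^{(b)})$-CD of $\lambda K_{3\times m}$. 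For Part 3, the hypotheses on $a,b,c,\delta$ and the excluded pairs $(b,c)$ are precisely those of Lemma~\ref{lem:n=4} with $\mu=\lambda m$ (note $\delta$ depends on the parity of $\lambda m = \mu$ in both statements), so Lemma~\ref{lem:n=4} gives a $(2^{(a)},3^{(b)},4^{(c)})$-CD of $\lambda m K_4$, and Theorem~\ref{mainthmultialspach} upgrades it to a $((2m)^{(a)},(3m)^{(b)},(4m)^{(c)})$-CD of $\lambda K_{4\times m}$.

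I would also note that the 1-factor bookkeeping is handled automatically: a $(c_1,\dots,c_k)$-CD in our convention already includes a 1-factor exactly when the vertex degrees are odd, i.e.\ when $\lambda m(n-1)$ is odd, and Theorem~\ref{mainthmultialspach} preserves this; so the decompositions of $\lambda K_{n\times m}$ include a 1-factor precisely when $\lambda m(n-1)$ is odd, matching the CD notation. There is essentially no obstacle here --- the corollary is a routine composition of the main theorem with the three small-case lemmas --- the only point requiring a moment's care is checking that the divisibility/parity and exceptional-case hypotheses of Parts 2 and 3 translate verbatim into the hypotheses of Lemmas~\ref{lem:n=3} and \ref{lem:n=4} under the substitution $\mu=\lambda m$, which they do.
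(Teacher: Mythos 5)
Your proposal is correct and follows exactly the paper's own argument: substitute $\mu=\lambda m$ into Lemmas~\ref{lem:n=2}, \ref{lem:n=3}, and \ref{lem:n=4} to obtain the required cycle decompositions of $\lambda m K_n$ for $n=2,3,4$, and then apply Theorem~\ref{mainthmultialspach}. The extra remarks about the 1-factor convention and the verbatim translation of hypotheses are sound and consistent with the paper.
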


\begin{proof}
By Lemmas~\ref{lem:n=2}, \ref{lem:n=3}, and \ref{lem:n=4}, respectively, there exist the following:
\begin{enumerate}
\item a $(2,\ldots,2)$-CD of $\lambda m K_{2}$;
\item a $(2^{(a)},3^{(b)})$-CD of $\lambda m K_{3}$; and
\item a $(2^{(a)},3^{(b)},4^{(c)})$-CD of $\lambda m K_{4}$.
\end{enumerate}
The result then follows by Theorem~\ref{mainthmultialspach}.
\end{proof}

\begin{center}
{\large \bf Acknowledgement}
\end{center}

Our sincere thanks to the anonymous referees for very quick reading and thoughtful comments, in particular to the referee who generously contributed the present proof of Lemma~\ref{lem:B5}. The second author also wishes to acknowledge financial support by the Natural Sciences and Engineering Research Council of Canada (NSERC).
\medskip

\end{document}